\newcommand{\Hyp}{\mathbb{H}}
\newcommand{\HH}{\mathbb{H}}
\newcommand{\HHb}{\overline{\mathbb{H}^2}}
\newcommand{\RR}{\mathbb R}
\newcommand{\ZZ}{\mathbb Z}
\newcommand{\RRb}{\overline{\mathbb R}}
\newcommand{\HxR}{\Hyp^2\times\mathbb{R}}
\newcommand{\del}{\partial}
\newcommand{\calC}{\mathcal C}
  \newcommand{\HR}{X}
  \newcommand{\Gcomp}[1]{\overline{#1}^g}
  \newcommand{\Gbound}{\operatorname{\partial_g}}
  \newcommand{\Pcomp}[1]{\overline{#1}^\times}
  \newcommand{\Pbound}{\operatorname{\partial_\times}}
  \newcommand{\argtanh}{\operatorname{\tanh^{-1}}}
\newcommand{\calU}{\mathcal U}
\newcommand{\calF}{\mathcal F}
\newtheorem{prob}{Problem}
  \title[Minimal surfaces in $\HxR$]{On the asymptotic
        behavior of minimal surfaces in $\HxR$}
  \author{Beno\^{\i}t R. Kloeckner}
  \author{Rafe Mazzeo}
\begin{document}

\begin{abstract}
We consider the asymptotic behaviour of properly embedded minimal surfaces 
in $\HxR$ taking into account the fact that there is more than one natural
compactification of this space. This provides a better setting in which to 
consider the general problem of determining which curves at infinity are the
asymptotic boundary of such minimal surfaces. We also construct some new
examples of such surfaces and describe the boundary regularity.
\end{abstract}

\maketitle

\section{Introduction}

The study of minimal surfaces in $\HxR$ has received considerable attention in the past decade or more, 
beginning with the work of Nelli and Rosenberg \cite{Nelli-Rosenberg,Nelli-Rosenberg_err}; we refer to 
\cite{Mazet-Rodriguez-Rosenberg,Masaltsev,Kim-Koh-Shin-Yang, Espinar-Rodriguez-Rosenberg,Morabito,Pyo,Martin-Mazzeo-Rodriguez} 
as well as other references cited below.  This is part of a larger effort to understand minimal surfaces in each of the three-dimensional 
homogeneous (Thurston) geometries. The understanding of minimal surfaces in $\RR^3$, $S^3$ and $\HH^3$ is fairly advanced, 
but by contrast, although $\HxR$ is the simplest non-constant-curvature geometry of this type, the behavior of minimal surfaces in it 
is much less well understood. 

Our initial motivation in the work leading to this paper was to consider this from
a much more general point of view, namely as a stepping stone toward the study
of complete properly embedded minimal submanifolds in general symmetric spaces of noncompact
type and arbitrary rank. The basic question is the asymptotic Plateau problem, in which one asks 
which curves on the asymptotic  boundary of such a space can be ``filled'' by minimal surfaces. 
However, these spaces admit many useful but non-equivalent compactifications, which adds both ambiguity
and complexity to this problem.  Our goal here is to study some aspects of
this problem in $\HxR$, which is a particularly simple type of rank $2$ symmetric space.
Strictly speaking, this space is reducible and so does not exhibit the true geometric
complexity of a standard rank $2$ space such as $\mathrm{SL}(3)/\mathrm{SO}(3)$;
however, it is hyperbolic in some directions and Euclidean in others, which leads to
some surprising phenomena. Furthermore, it does have two interesting and natural
compactifications: the `product' compactification and the geodesic compactification.
We explain these in some detail below, but briefly, the product compactification
is the one obtained as the product of the compactifications of each of the factors,
$\overline{\HH^2} \times \overline{\RR}$, while the geodesic compactification is
a closed $3$-ball obtained by attaching an endpoint to each infinite geodesic
ray emanating from a given point $p$. 

Nelli and Rosenberg \cite{Nelli-Rosenberg} proved that every simple closed curve 
lying in the vertical boundary $\del \HH^2 \times \RR$ which is a vertical graph 
(over $\del \HH^2 \times \{0\}$) is the asymptotic boundary of a minimal surface -- or, 
as we shall say, is minimally fillable. On the other hand, \cite{SaEarp-Toubiana} shows that many curves 
on this vertical boundary are not minimally fillable.  We may also consider curves which 
reach $\del \HH^2 \times \{\pm \infty\}$, or which cross into the horizontal parts of this 
boundary (i.e., the caps at $\del \RR$). We explain below a general result, Proposition \ref{theo:fillability},
which guarantees fillability of a wide class of curves, but also present curves, in Theorem \ref{theo:butterfly}, 
which are minimally fillable but do not satisfy the hypotheses of Proposition \ref{theo:fillability}. 

We also consider the fillability question for curves in the geodesic boundary; this
seems not to have been considered explicitly before, and was the original starting
point of our work. It turns out that in fact almost no curve in this geodesic
boundary is minimally fillable. Our second main result Theorem \ref{theo:oscillations} 
shows that a minimal surface with a nontrivial portion of its asymptotic boundary 
contained in the geodesic boundary of $\HxR$ must oscillate so greatly that its 
boundary cannot be a curve, and often has nonempty interior. We present some 
examples which exhibit this behaviour in Theorem \ref{theo:examples}.   

The research which led to this paper was undertaken some years ago, but for
various reasons this manuscript was not completed. In the intervening time,
an interesting new paper by Coskunuzer \cite{Coskunuzer} has appeared
which addresses certain of these same questions, but from a somewhat
different point of view. While there are obvious points of intersection 
between these two papers, the present paper contains a number of different
results and perspectives which we hope will be of interest. 




\subsection*{Acknowledgements}
The first author is grateful to Anne Parreau and Fran\c cois Dahmani for interesting discussions related 
to Theorem \ref{theo:examples}. The second author acknowledges many useful conversations
about this general subject with Francisco Martin and Magdalena Rodriguez.

\section{Compactifications}

In this first section we set some notation and describe in 
more detail the two compactifications of $\HR$ mentioned 
above

\subsection{Notation}

To simplify notation, we often write $\HH^2 \times \RR$ as $\HR$.  Let $g$
be the standard product metric on this space, and $d$ the induced distance function.
The distance in the horizontal factor $\Hyp$ is denoted by $d_\Hyp$.

We consider two compactifications of $\HR$: the product compactification $\Pcomp{\HR}$ and the geodesic 
compactification $\Gcomp{\HR}$. Each is obtained by adjoining to $\HR$ an asymptotic boundary, $\Pbound\HR$ and 
$\Gbound\HR$, respectively. Given any properly embedded surface $\Sigma \subset \HR$, we let 
$\Pcomp{\Sigma}$ denote the closure of $\Sigma$ in $\Pcomp{\HR}$ and then write $\Pbound\Sigma:=
\Pcomp\Sigma\cap\Pbound\HR$. We use the corresponding notation for the analogous sets in the geodesic compactification. 

The two factors in $\HR$ have rank one (i.e., the maximal flat totally geodesic subspaces are one-dimensional),
hence all of their classical compactifications are equivalent. We denote these by 
\[
\overline{\Hyp}^2=\Hyp^2\cup \partial \Hyp^2 \quad\mbox{and}\quad 
\overline{\mathbb{R}}=\mathbb{R} \cup \partial \mathbb{R}. 
\] 

\subsection{The product compactification of $\HxR$}

The product compactification of $\HR$ is the product 
\[
\Pcomp{\HR}=\overline{\Hyp}^2\times \overline{\mathbb{R}}
\]
with the product topology (see Figure \ref{fig:product}). Its boundary $\Pbound \HR=\Pcomp{\HR}\setminus\HR$
is the disjoint union of three parts:
\[
\Pbound \HR= \big(\partial \Hyp^2 \times \mathbb{R}\big) \cup 
            \big(\partial \Hyp^2 \times \partial \mathbb{R}\big) \cup
            \big(\Hyp^2 \times \partial \mathbb{R}\big)
\]
The first component, which is an open cylinder, is called the vertical boundary; the third is 
the union of two open disks, which we call the upper and lower caps. The middle component
is the product of the boundaries of each factor and is the union of two circles. 
In particular, $\Pcomp{\HR}$ is a manifold with boundary and corners of codimension $2$. 

\begin{figure}[tb]
\labellist
\small \hair 3pt
\pinlabel $\Hyp^2$ [r] at 0 60
\pinlabel {singular circles} [Bl] at 155 58
\pinlabel cap at 58 87
\pinlabel cap at 60 11
\pinlabel $\partial\Hyp^2\times\mathbb{R}$ [r] at 41 34
\pinlabel $\mathbb{R}$ [l] at 76 69
\endlabellist
\centering
\includegraphics[scale=1.5]{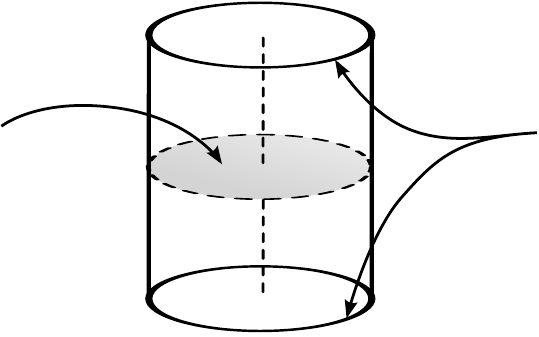}
\caption{The product compactification of $\HxR$.}
\label{fig:product}
\end{figure}

\begin{rema}
This product compactification only makes sense for a product of rank one spaces. For $\HxR$, it is essentially 
isomorphic to more classical compactifications (Satake-Furstenberg, Chabauty, \dots), but this identification 
is not quite true because of issues related to the one-dimensional Euclidean factor. 

To be more specific, the Chabauty compactification is defined as follows: each point in $\HR$ is identified with its 
isotropy group, and the compactification is obtained by taking the closure of the image of this map with respect to 
the so-called Chabauty topology on closed subgroups of the isometry group. This procedure does not distinguish 
between the two caps, $\HH^2 \times \{+\infty\}$ and $\HH^2 \times \{-\infty\}$, so these are glued together 
and the resulting compactification is a solid torus. Other compactifications have different undesirable features, 
again because of this Euclidean factor. This product compactification, although perhaps ad hoc, serves our purposes well.
\end{rema}

\subsection{The geodesic compactification of $\HxR$}

Now recall the geodesic compactification.  This is defined for any simply connected, 
non-positively curved manifold, see \cite{Eberlein}, but we discuss it only for $\HR$. 

\subsubsection{Construction}
We work only with geodesics with constant speed parametrizations, and include the case of speed zero, 
which are constant maps. A geodesic of speed $1$ is called a \emph{unit} geodesic.  A \emph{ray} 
is a geodesic defined on $\RR^+$. The set of all rays in $\HR$ is denoted by $\mathcal{R}$ and the subset 
of unit rays is written $\mathcal{R}_1$.

Two rays $\gamma,\sigma$ are \emph{asymptotic} at $+\infty$, $\gamma\sim \sigma$,  if they remain at bounded distance 
from one from another in the future: 
\[ 
\sup_{t \geq 0} d(\gamma(t),\sigma(t)) < \infty.
\]
This is an equivalence relation, and the class of a ray $\gamma$ is denoted $[\gamma]$. Given $x\in \HR$ 
and $u\in T_x \HR$, then
\[
\gamma(x,u)(t) := \exp_x(tu)
\]
is the unique ray with initial position $x$ and velocity $u$. 

The \emph{geodesic boundary} of $\HR$ is the set of classes of unit rays:
\[
\Gbound\HR:= \mathcal{R}_1 / \sim
\]
endowed with the following topology. For any $x\in\HR$, there is a natural map from the unit sphere in the tangent space 
to this geodesic boundary: 
\begin{eqnarray*}
\pi_x := S_x\HR &\to& \Gbound\HR \\
  u &\mapsto& [\gamma(x,u)]
\end{eqnarray*}
This is a bijection, so we endow $\Gbound\HR$ with the topology of $S_x\HR$ under this identification.
For any two points $x,y$, $\pi_x^{-1}\circ \pi_y$ is a homeomorphism, so this topology does not depend 
on the choice of $x$.

The \emph{geodesic compactification} of $\HR$ is the disjoint union 
\[ 
\Gcomp{\HR} := \HR \cup \Gbound\HR;
\]
with topology determined by the requirements that $\HR$ is open in $\Gcomp{\HR}$ and inherits its usual topology,
and a sequence $x_n$ in $\HR$ converges to a boundary  point if and only if for some (and hence all) 
$y\in \HR$, $d(y,x_n)\to \infty$ and the sequence $u_n$ of unit tangent vectors at $y$ defined by 
$x_n\in\gamma(y,u_n)$ converges to some $u$; then
the limit of $x_n$ is $[\gamma(y,u)]$, which does not depend on $y$.

The topology defined earlier on $\Gbound\HR$ coincides with the induced topology. 
With this topology, $\Gcomp{\HR}$ is homeomorphic to a closed ball, and the isometry 
group of $\HR$ extend to an action by homeomorphisms on the compactification.

\subsubsection{Structure}
Each unit ray $\gamma$ in $\HR$ can be written as $(\gamma_1,\gamma_2)$ where $\gamma_1$ and $\gamma_2$ are
rays in $\Hyp^2$ and $\RR$ with speeds $s_1$ and $s_2$ respectively, where $s_1^2+s_2^2=1$. 
If $\gamma'=(\gamma'_1,\gamma'_2)$ is another unit ray together with projections of speed $s'_i$, then $\gamma\sim \gamma'$ 
if and only if $\gamma_i\sim \gamma'_i$ for $i=1,2$.  This implies that $s_i=s'_i$ (in particular, 
any two constant geodesics are asymptotic to one other).

\begin{figure}[tb]
\labellist
\small \hair 3pt
\pinlabel {a Weyl chamber} [tl] at 340 190
\pinlabel {Equator} [bl] at 358 93
\pinlabel $p^+$ [b] at 205 260
\pinlabel $p^-$ [t] at 205 13
\pinlabel $\Hyp^2\times\{\bullet\}$ [br] at 78 201
\pinlabel {\parbox{13ex}{\flushright a horizontal geodesic}}  [r] at 55 148
\pinlabel {\parbox{11ex}{\flushright a vertical geodesic}} [r] at 76 81
\endlabellist
\centering
\includegraphics[scale=0.8]{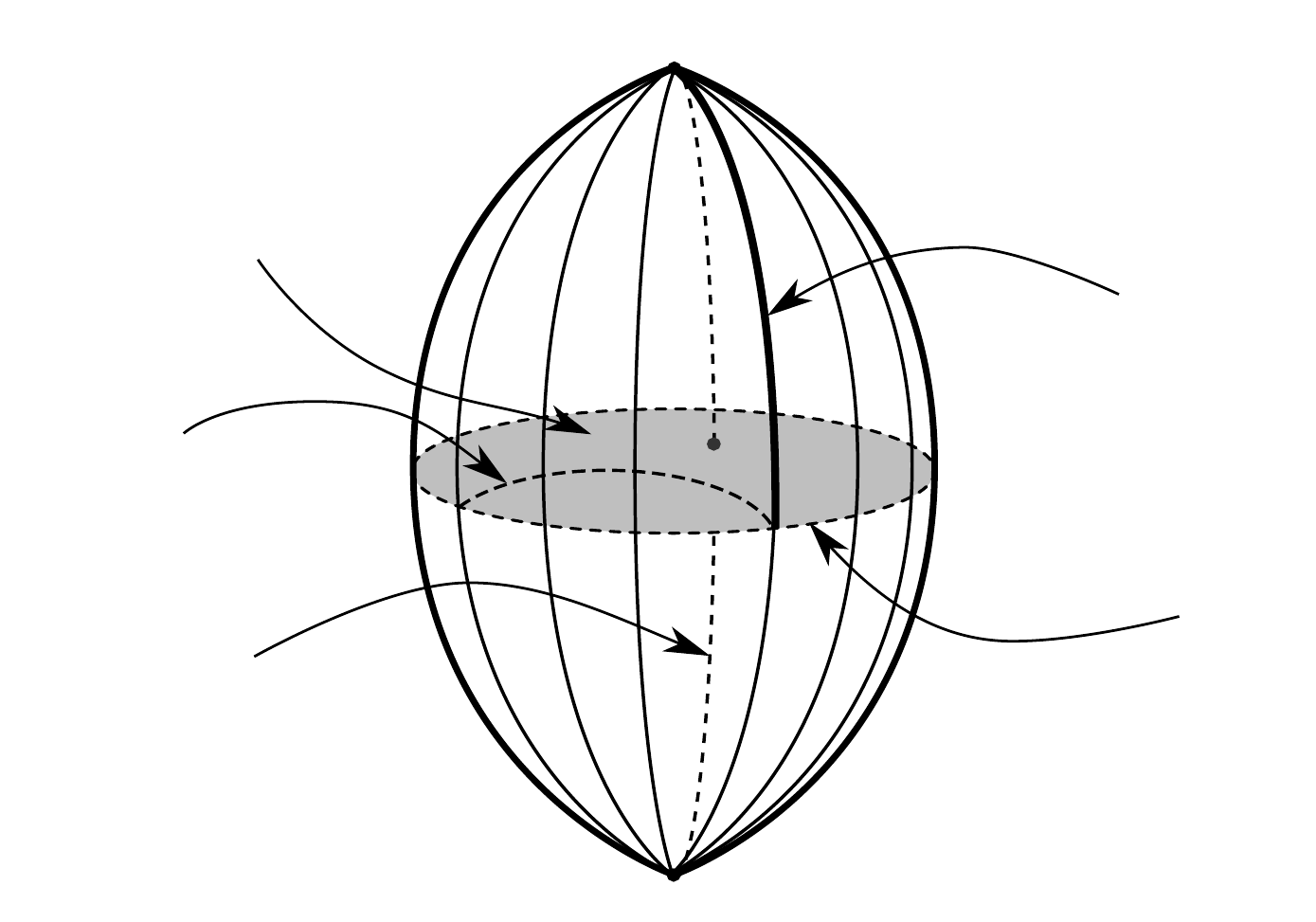}
\caption{The geodesic compactification of $\HxR$. Note that all
horizontal planes $\HH^2\times\{\bullet\}$ are asymptotic, i.e.
have their boundary equal to the same curve, the equator.}
\label{fig:geodesic}
\end{figure}

When $s_1=0$, then $\gamma$ projects to a constant in the $\Hyp^2$ factor and
we  say that $\gamma$ is \emph{vertical}. There are two classes of vertical geodesic, 
those that go up and those that go down, corresponding to the two points of $\del \RR$.
These are called the \emph{poles} and denoted $p^\pm$. 

imilarly, when $s_2=0$, then $\gamma$ is called \emph{horizontal}. Since $(\gamma_1, t_0) \sim
(\gamma_1, t_0')$ for any $t_0, t_0' \in \RR$, asymptotic classes of horizontal geodesics
are in bijection with points of $\del \Hyp^2$.  The set of all horizontal classes comprises 
the \emph{equator} of $\Gbound\HR$. 

These special subsets of $\Gbound\HR$ are artifacts of the product structure of $\HR$, and
are related to the fact that this geodesic boundary has the structure of a joint.  This means
the following. For each $(q, p^*) \in \del \HH^2 \times \del \RR$, $* = \pm$, there is a segment 
in $\Gbound \HR$ which consists of all classes $[(\gamma_1,\gamma_2)]$ where either
$[\gamma_1]=q$ or $\gamma_1$ is constant, and in addition, $[\gamma_2]=p^*$ or $\gamma_2$ is constant.
This segment is parametrized by the ratio of speeds $\rho = s_2/s_1\in \overline{\RR^+}$ and connects a point of 
the equator to one of the poles. It is called a \emph{Weyl chamber} and denoted $W^*(q)$, $* = \pm$. 
The set of all Weyl chambers is the union of two circles,  the set of midpoints of Weyl chambers, where $\rho=1$, 
is again the union of two circles, and can be identified with the Furstenberg boundary.

One fact which will play an important role later is that vertical translation acts trivially on $\Gbound\HR$.

\subsubsection{Relationship between $\Pcomp{\HR}$ and $\Gcomp{\HR}$}
These two compactifications are related in the following way.  Briefly, $\Gcomp{\HR}$
is obtained from $\Pcomp{\HR}$ by blowing up the corner and blowing down the top 
and bottom caps as well as the vertical cylinder. Similarly, to get to $\Pcomp{\HR}$ from
from $\Gcomp{\HR}$, we blow up the equator and the poles and blow down each of the joint
segments to points. 

In slightly more detail, the blowdown of the top and bottom caps of $\Pcomp{\HR}$ 
is the space where sequences $q_n \in \HR$ converging to any point of the top cap
in the product compactification are identified with one another, and hence correspond
to sequences converging to $p^+$ in the geodesic compactification.  In a similar way,
any point $(q,t) \in \del \HH \times \RR$ is mapped to the point $q$ on the 
equator of $\Gbound{\HR}$.  In the other direction, points in the corner $\del \HH \times \del \RR$
are stretched into the Weyl chambers, which thus captures the direction of approach to
infinity (i.e., the ratio of speeds) for geodesics $(\gamma_1, \gamma_2)$ which escape to infinity in both factors. 

From all of this, we see that points of $\Pbound{\HR}$ and $\Gbound{\HR}$ correspond
to different ways of distinguishing classes of diverging geodesics.

\section{Embedded minimal surfaces in $\HxR$} 

In this section we write out the PDE for minimal surfaces in $\HR$ in terms of
two natural graphical representations, and explain certain boundary regularity theorems. 
After that we recall a number of important examples of these surfaces which will enter our later constructions.

\subsection{Equations of horizontal and vertical minimal graphs}
In the following, use the half-space model for $\HH^2$ with coordinates $z=(x,y)$, $x > 0$, $y \in \RR$ and metric 
$(dx^2 + dy^2)/x^2$, as well as the corresponding coordinates $(x,y,t) \in \HR$ and metric 
\[
g = \frac{dx^2 + dy^2}{x^2} + dt^2
\]
on $\HR$. 

There are two obvious ways to represent a surface in $\HR$, namely as a graph over a horizontal slice $\HH \times \{0\}$, 
or as a graph over a vertical flat $F=\gamma\times \RR$ where $\gamma$ is a geodesic in $\Hyp^2$. In the former case, we write 
\[
\Sigma = \{ (z, u(z)): z \in \HH^2 \},
\]
while in the later, if $F=\{y=0\}$, then
\[
\Sigma = \{ (x,u(x,t),t): x>0, t\in\RR \}.
\]

The only nonvanishing Christoffel symbols in these coordinates are
\[
\Gamma^x_{xx} = -\frac1x, \ \ \Gamma^x_{yy} = \frac1x, \ \  \Gamma^y_{yx}=\Gamma^y_{xy} = -\frac1x.
\]


\subsubsection{Vertical graphs}
First suppose that $\Sigma$ is the graph of some function $u(x,y)$ defined over an open set $\calU \subset \HH^2$.
Then it is standard, see \cite{SaEarp-Toubiana} that the graph of $u$ is minimal if and only if
\[
\mathrm{div}^g \,
\left( \frac{\nabla^g u}{\sqrt{ 1 + |\nabla^g u|_g^2}} \right)=0. 
\]
where the divergence and gradient are taken with respect to the hyperbolic metric. Expanding in terms of 
the Euclidean metric $g_0$, this is the same as
\begin{equation}
\Delta_{g_0} u\left(1+x^2|\nabla^{g_0} u|_{g_0}^2\right) -x^2\sum_{i,j} u_{ij}u_i u_j -x u_x |\nabla^{g_0} u|_{g_0}^2 = 0,
\label{hormse}
\end{equation}
or finally, in upper half-space coordinates,
\begin{equation}
u_{xx}(1+x^2 u_y^2) + u_{yy}(1+x^2 u_x^2)  -2 x^2 u_{xy}u_x u_y  -x u_x(u_y^2+u_x^2) = 0.
\label{eq:horizontal}
\end{equation}

Solutions of this equation which are defined over all of $\HH^2$ are called {\it vertical graphs}; this parlance is perhaps
confusing inasmuch as the ends of these are horizontal.

Since \eqref{eq:horizontal} is uniformly elliptic, even near $x=0$, we deduce the following, 
see \cite[Theorem 15.11]{Gilbarg-Trudinger}.
\begin{prop}
Let $\calU$ be an open set in $\HHb$ which intersects $\del \HH^2$ in an open interval. Let $u(x,y)$
be a function defined on $\calU$ which satisfies \eqref{hormse}. If $u(0,y) \in \calC^{k,\alpha}$ for
any $k \in \mathbb N$ and $0 < \alpha < 1$, then $u$ is $\calC^\infty$ in $\calU \cap \HH^2$ 
and $\calC^{k,\alpha}$ on $\calU$ up to $x=0$. 
\label{bregver}
\end{prop}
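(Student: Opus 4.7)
The plan is to recognize that equation \eqref{eq:horizontal} is of the form
\[
a^{ij}(x,\nabla u)\, u_{ij} + b(x, \nabla u) = 0,
\]
with principal symbol
\[
a^{ij} = \delta^{ij}\bigl(1 + x^2 |\nabla u|_{g_0}^2\bigr) - x^2 u_i u_j,
\]
and first-order term $b = -x u_x(u_x^2 + u_y^2)$. The key point is that at $x = 0$ the matrix $a^{ij}$ collapses to the identity and $b$ vanishes, so the equation degenerates to the Laplace equation on $\del\HH^2$. More uniformly, for any bound $|\nabla u| \le M$ on $\calU$, one has
\[
|\xi|^2 \le a^{ij}\xi_i\xi_j \le (1 + x^2 M^2)|\xi|^2,
\]
so the equation is genuinely uniformly elliptic up to $x=0$, with coefficients that are smooth functions of $(x,y,p)$.

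The strategy is then the standard bootstrap for quasilinear uniformly elliptic equations with smooth boundary. First, interior regularity and the smoothness of the boundary data $u(0,y) \in \calC^{k,\alpha}$ together with a boundary gradient estimate (as in \cite[Ch.~13]{Gilbarg-Trudinger}, applied to the uniformly elliptic quasilinear equation \eqref{eq:horizontal} with smooth boundary $\{x=0\}$ and $\calC^{1,\alpha}$ data) give a $\calC^{1,\alpha}$ bound on $u$ up to $x = 0$. With this in hand, freeze the coefficients: set $\tilde a^{ij}(x,y) := a^{ij}(x, \nabla u(x,y))$, which lies in $\calC^{0,\alpha}(\calU)$, so $u$ satisfies a linear uniformly elliptic equation $\tilde a^{ij} u_{ij} = -b(x,\nabla u)$ with $\calC^{0,\alpha}$ right-hand side. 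The boundary Schauder estimate \cite[Theorem 6.6]{Gilbarg-Trudinger} then yields $u \in \calC^{2,\alpha}$ up to $\{x=0\}$; this is precisely the conclusion of \cite[Theorem 15.11]{Gilbarg-Trudinger} invoked in the statement.

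To reach arbitrary $\calC^{k,\alpha}$, I would iterate. Differentiating \eqref{eq:horizontal} in the tangential direction $y$, the function $v = u_y$ satisfies a linear uniformly elliptic equation of the same type, whose coefficients involve one derivative of $u$ and are therefore in $\calC^{k-2,\alpha}$ after one step, and whose boundary data $v(0,y) = u_y(0,y) \in \calC^{k-1,\alpha}$. Repeated application of interior and boundary Schauder estimates, combined with using the equation itself to recover normal derivatives in $x$ (solving algebraically for $u_{xx}$ in terms of lower-order derivatives, which is possible since the coefficient $1 + x^2 u_y^2$ of $u_{xx}$ is uniformly positive), upgrades the regularity from $\calC^{\ell,\alpha}$ to $\calC^{\ell+1,\alpha}$ at each step, terminating at $\calC^{k,\alpha}$.

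The main conceptual obstacle is the initial boundary $\calC^{1,\alpha}$ estimate: once it is established, the higher-order regularity is a purely mechanical bootstrap. This first step is nontrivial for general quasilinear equations, but here it is facilitated by the fact that the equation is already uniformly elliptic up to the boundary and in fact equals the Laplace equation on $\{x = 0\}$, so barrier arguments using harmonic extensions of the boundary data (or the direct application of \cite[Theorem 13.7]{Gilbarg-Trudinger}) apply straightforwardly. Because the proposition explicitly invokes \cite[Theorem 15.11]{Gilbarg-Trudinger}, I expect the intended proof is simply to verify that \eqref{hormse} falls within the hypotheses of that theorem and then cite it.
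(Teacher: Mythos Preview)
Your proposal is correct and matches the paper's approach exactly: the paper simply observes that \eqref{eq:horizontal} is uniformly elliptic up to $x=0$ and cites \cite[Theorem 15.11]{Gilbarg-Trudinger}, without giving any further argument. Your write-up spells out the ellipticity check and the Schauder bootstrap in more detail than the paper does, but the content is the same.
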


\subsubsection{Horizontal graphs}
Now suppose that $\Sigma$ is a graph over a flat $F = \gamma \times \RR$, where
as before, $F = \{ y = 0\}$. We write this initially as $\{(x,y,t): y = v(x,t)\}$. 
Very similar calculations to the ones above show that this graph is minimal if and only if
\begin{equation}
v_{xx}(x^2+v_t^2)+v_{tt}(1+v_x^2) -2v_{xt} v_x v_t -x v_x(1+v_x^2)=0.
\label{eq:vertical}
\end{equation}
Solutions are called horizontal graphs (noting, however, that these surfaces have vertical ends). 

This equation is nondegenerate at $x=0$ only when $v_t(0,t) \neq 0$. However, if this condition were to hold, 
then the surface could also be written locally as a vertical graph, and its regularity near such 
portions of the boundary would therefore follow already from Proposition~\ref{bregver}. 

We restrict, therefore, to consideration  of the special case where $v(0,t) \equiv 0$ for $t$ lying in some
interval $\mathcal I$.   This excludes cases where $v_t(0, t_0) = 0$ at some isolated point $t_0$, 
as well as endpoints of vertical boundary regions, i.e.\ values $t_0$ such that $v(0,t) = 0$ for $t_0 - \epsilon < t \leq t_0$
and $v'(0,t) \neq 0$ for $t_0 < t < t_0 + \epsilon$.   If $|\mathcal I| > \pi$, then there is an immediate
a priori estimate for the behavior of $v$ near $x=0$ which is obtained by trapping $\Sigma$ between barriers
on either side. As barriers we use the `tall rectangles' which are described in the next section. (These are minimal
disks which intersect $\Pbound{\HR}$ in the union of two vertical lines of height greater than $\pi$ and
two circular arcs connecting their endpoints; these make a variable angle of contact along the vertical
portions of their boundary.)  This geometric argument yields a Lipschitz bound for $v$ as an immediate corollary.
\begin{prop}
Suppose that some portion of the boundary of the minimal surface $\Sigma$ is a vertical line $\mathcal I \subset \Pbound{\HR}$
with $|\mathcal I| > \pi$. Writing $\Sigma$ as a horizontal graph of a function $v$ defined on a vertical flat which
contains $\mathcal I$ in its boundary, then for each compact subinterval $\mathcal I'$ in $\mathcal I$ there exists
a constant $C > 0$ such that $|v(x,t)| \leq Cx$. 
\label{Lipbound}
\end{prop}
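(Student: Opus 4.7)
The plan is to sandwich $\Sigma$ between two tall rectangle barriers and then read off the Lipschitz estimate from the known boundary behavior of the barriers. Since $|\mathcal I|>\pi$, for each $t_0\in\mathcal I'$ one may select a subinterval $\mathcal J\subset\mathcal I$ of length strictly greater than $\pi$ whose interior contains $t_0$. By compactness of $\mathcal I'$ it suffices to produce the bound in a neighborhood of each such $t_0$ and take the maximum of finitely many constants.

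For each such $t_0$, I construct two tall rectangle minimal surfaces $R^\pm$, with $R^+$ lying in $\{y>0\}$ and $R^-$ in $\{y<0\}$, chosen so that one of the two vertical boundary segments of each $R^\pm$ coincides with the segment $\{x=0\}\times\mathcal J\subset\Pbound\HR$. Tall rectangles are smooth minimal disks up to and including their vertical boundary, and meet the flat $F=\{y=0\}$ there at a strictly positive contact angle; in particular, near $\{0\}\times\mathcal J$ each $R^\pm$ is itself a horizontal graph $y=w^\pm(x,t)$ with $w^\pm(0,t)\equiv 0$ and $|\partial_x w^\pm|$ bounded, which gives $|w^\pm(x,t)|\leq C x$ with $C$ uniform over compact subintervals of $\mathcal J$. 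The opposite vertical segment and the two connecting circular arcs of each $R^\pm$ are placed far from $\mathcal J$, so that the comparison region between $R^-$ and $R^+$ near $\{0\}\times\mathcal J$ is unambiguous.

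A maximum principle argument for the minimal surface equation \eqref{eq:vertical} then traps $\Sigma$ between $R^-$ and $R^+$ in a neighborhood of $(0,t_0)$, yielding $w^-(x,t)\leq v(x,t)\leq w^+(x,t)$ and therefore $|v(x,t)|\leq Cx$ there, as required.

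The main obstacle will be establishing the comparison hypothesis on the non-vertical portions of the boundary of the comparison region, where one must already know that $\Sigma$ lies on the correct side of $R^\pm$. This is where the proper embeddedness of $\Sigma$ enters, via an Alexandrov-type sliding argument: one starts the tall rectangles in positions so far from $\Sigma$ that the required inequality is obvious (for instance by translating them horizontally into $\{x\gg 0\}$ or by rotating the free vertical edge around in $\partial\HH^2$), and then slides them toward the target configuration; no first interior contact can occur by the strong maximum principle, and no first boundary contact can occur along $\{0\}\times\mathcal J$ because $\Sigma$ and $R^\pm$ share that segment by construction and meet $F$ at contact angles that can be chosen to differ from that of $\Sigma$ in the appropriate sense.
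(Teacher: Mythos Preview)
Your approach is essentially the same as the paper's: the authors' proof is in fact just the one-sentence sketch preceding the proposition, namely ``trap $\Sigma$ between tall-rectangle barriers on either side,'' together with the observation that these barriers make a variable (hence, for suitable choices, bounded) angle of contact along their vertical boundary segment. Your expansion into a sliding/max-principle argument is exactly the mechanism the paper has in mind; one minor quibble is that ``translating horizontally into $\{x\gg 0\}$'' is not the right motion (the shared vertical edge at $q_1$ must stay put), but your alternative of rotating the free vertical edge around $\partial\HH^2$---equivalently, varying the contact angle---is the correct sliding parameter.
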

Proposition~\ref{theo:fillability} below provides a large class of examples of minimal surfaces which
have boundary containing a vertical line segment. 

Unfortunately it seems difficult to show that $\Sigma$ is actually smooth up to a vertical boundary segment. The reason is that the particular 
type of degeneracy in equation \eqref{eq:vertical}  only permits 
good regularity results in certain restricted cases, typically where the variable $t$ lies in a compact manifold with boundary 
(e.g.\ the circle) or else if we already know quite a bit more about the values of the function $v(x,t)$ along the curves 
$t = t_0$ and $t = t_1$ at the top and bottom of the interval $\mathcal I$.  We can, however, show that $v$ is {\it conormal}
at $x=0$, which is slightly weaker regularity statement.  We explain this now. 

First let us reparametrize $\Sigma$ by writing it as the exponential of $w \nu$, where $\nu = x \del_y$ is the {\it unit} normal to $F$.
It is not hard to check that $w = x v + \mathcal O(x^2)$. The Lipschitz bound in Proposition~\ref{Lipbound} shows that 
$\Sigma$ remains a bounded distance from $F$ even up to $x=0$, so $w(x,t)$ satisfies $|w| \leq C$. Next, change variables 
by setting $s = -\log x$, which means that we regard $w$ as a function of $(s,t)$. Thus $w$ satisfies the minimal surface 
equation $\mathcal M(w) = 0$ over an infinite region $\mathcal S = [s_0, \infty) \times \mathcal [t_0, t_1]$. 

In these coordinates, the minimal surface operator $\mathcal M$ is a quasilinear elliptic operator with uniformly 
bounded coefficients, see \cite{Gilbarg-Trudinger}, and it follows from classical estimates (applied on any ball of
some fixed radius $r_0$ in the $(s,t)$ coordinates) there that the function $w$ and all its derivatives
are uniformly bounded, i.e.,  $|\del_s^p \del_t^q w| \leq C_{p,q}$ for every $p, q \in \mathbb N$.  Equivalently, since
$\del_s = x\del_x$, $|(x\del_x)^p \del_t^q w| \leq C_{p,q}$ for all $p, q$. However, this collection of estimates is just
the definition of what is known as {\it conormal} regularity of order $0$ at the boundary $x=0$. The graph function
$v$ is conormal of order $1$, i.e., $|(x\del_x)^p \del_t^q v| \leq C_{p,q}x$ for all $p, q$. We summarize all of this in the
\begin{prop}
Let $\Sigma$ be a complete minimal surface which contains a vertical interval $\mathcal I$ in its asymptotic 
boundary. Write $\Sigma$ as a horizontal graph, with graph function $v(x,t)$, and assume that $|v(x,t)| \leq Cx$
(this condition is automatic by the barrier construction if $|\mathcal I| > \pi$). Then for all $p, q \in \mathbb N$, 
\[
|(x\del_x)^p \del_t^q v(x,t)| \leq C_{p,q} x,
\]
i.e., $v$ is conormal of order $1$. 
\end{prop}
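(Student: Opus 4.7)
The plan is to convert the degeneracy at $x=0$ into an end at infinity by passing to the coordinate $s=-\log x$ and reparametrizing $\Sigma$ off the flat $F$ by a bounded Fermi-type function, then to invoke standard interior estimates for quasilinear elliptic equations. The approach is essentially the one sketched in the paragraph preceding the statement.

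First I would parametrize $\Sigma$ as the image of $(X,t)\mapsto \exp_{(X,0,t)}(w(X,t)\nu)$ with $\nu=X\del_y$. The assumption $|v(x,t)|\leq Cx$ amounts to $\Sigma$ staying at uniformly bounded hyperbolic distance from $F$, so $|w|\leq C'$. A direct computation in these Fermi coordinates yields the ambient metric in the form
\[
g = \cosh^2(w)\,\frac{dX^2}{X^2} + dw^2 + dt^2,
\]
and setting $s=-\log X$ transforms this into $\cosh^2(w)\,ds^2 + dw^2 + dt^2$, which is translation invariant in $s$ and $t$.

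Since the induced metric on the graph $w=w(s,t)$ depends only on $w$, $w_s$, $w_t$, the minimal surface equation $\mathcal{M}(w)=0$ is a quasilinear elliptic PDE whose coefficients are smooth functions of $(w,w_s,w_t)$ alone. On the region $\{|w|\leq C'\}$ these coefficients and their derivatives are uniformly bounded, and the ellipticity constants are uniform. Applying the Gilbarg--Trudinger interior $\calC^{k,\alpha}$ estimates on balls of fixed radius in $(s,t)$-coordinates, together with the usual bootstrap, yields uniform bounds $|\del_s^p\del_t^q w|\leq C_{p,q}$ on $[s_0,\infty)\times[t_0,t_1]$. Since $\del_s=-x\del_x$, this is precisely conormal regularity of order $0$ for $w$.

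To translate back to $v$, I would use the closed-form Fermi relations $x=X\operatorname{sech}(w)$ and $v=X\tanh(w)$, which combine to $v=x\sinh(w)$ along $\Sigma$. The change of variables between the Fermi coordinate $-\log X$ and the graph coordinate $s=-\log x$ is a smooth diffeomorphism whose derivatives are uniformly controlled by those of $w$, so the conormal estimates pass between the two coordinate systems without loss. The explicit factor of $x$ in $v=x\sinh(w)$ then ensures by Leibniz that $|(x\del_x)^p\del_t^q v|\leq C_{p,q}x$ for every $p,q$, as required. The main obstacle is really the second step, namely the metric computation and the observation that the logarithmic rescaling makes the ambient metric translation invariant; once that is in hand, the statement reduces to a routine interior estimate on a half-cylinder, and the final passage from $w$ to $v$ is purely algebraic.
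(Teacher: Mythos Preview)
Your proposal is correct and follows essentially the same route as the paper: pass to the Fermi graph function $w$, set $s=-\log x$ to make the setup translation invariant, apply interior quasilinear elliptic estimates on balls of fixed radius to bound all $(s,t)$-derivatives of $w$ uniformly, and then read off the order-$1$ conormal bounds for $v$. The explicit metric $\cosh^2 w\,ds^2+dw^2+dt^2$ and the closed-form relation $v=x\sinh w$ you supply are useful elaborations of steps the paper leaves implicit, and your remark about the harmless change of variables between $-\log X$ and $-\log x$ is a point the paper glosses over.
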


Now consider the problem of determining whether $\Sigma$ is actually smooth up to $\Pbound{\HR}$; this corresponds to the 
assertion that $w$ has an asymptotic expansion $ \sim \sum_{j \geq 0} e^{-js} w_j(t)$ where each $w_j \in \calC^\infty$. 
Suppose that we are in the even more restricted case where $\Sigma$ is locally trapped between portions of two flats which 
intersect along $\mathcal I$. This means that $|w| \leq C e^{-s}$. We can then study the minimal surface equation for $w$ 
perturbatively; in other words, we use the Taylor expansion of $\mathcal M$ at $0$, which can be written as
\[
Lw = Q(w, \nabla w, \nabla^2 w),
\]
where $L = \del_s^2 + \del_t^2 - 1$ is the Jacobi operator along $F$ and $Q$ is a quadratic remainder term.
The same classical local elliptic theory (in the $(s,t)$ coordinates) shows that all higher derivatives of $w$ satisfy the
same bounds as $w$ itself, i.e., $|\del_s^p \del_t^q w| \leq C_{p,q} e^{-s}$ uniformly in any semi-infinite 
strip $\mathcal S' \subset \mathcal S$. Relabelling $\mathcal S'$ as $\mathcal S$, we study this by first considering 
the inhomogeneous problem $Lw = f$ where $|f| \leq C e^{-2s}$ (along with similar estimates for its higher derivatives). 
Simplifying even further, consider the homogeneous problem 
\[
Lw = 0,  \quad w(t_0, s) = w_0(s),\ w(t_1, s) = w_1(s).
\]
If we happen to know that $w_0, w_1$ admit asymptotic expansions in integer powers of $e^{-s}$ as $s \nearrow \infty$,
then it is straightforward to show that the same is true for $w(s,t)$ for $t \in \mathcal I$. A small modification
of this argument gives the same conclusion for the nonlinear equation. However, our a priori information is only that
$|w_0|, |w_1| \leq Ce^{-s}$, and this is not enough information to reach the desired conclusion, even in a slightly
smaller strip. In fact, it is not hard to show that if $w_0$ and $w_1$ decay exponentially but do not have such smooth 
expansions, then neither does the solution $w$.   However, it is unclear whether such a phenomenon can happen
in our setting, i.e., when $\Sigma$ is a complete minimal surface, and it is entirely possible that $\Sigma$ is actually
smooth up to vertical boundaries, but this remains an open question.

Notice that we are not making any claim about the regularity of this graph at the horizontal caps
$\HH^2 \times \{\pm \infty\}$. We discuss this point later. 

\subsection{Examples} 
We review here the basic examples of properly embedded minimal surfaces in $\HR$. 
These give some intuition about boundary behavior of more general surfaces of this type,
and some of these will also be used as barriers in constructions below. 

\subsubsection{Horizontal disks and vertical planes}  
The simplest examples of properly embedded minimal surfaces are the ones which respect the product
structure of $\HR$.  These are the horizontal disks
\[
\HH^2 \times \{a\},
\]
for any $a \in \RR$, and the vertical planes
\[
F = \gamma \times \RR,
\]
where $\gamma$ is a geodesic in $\HH^2$. Note that each of these is not only minimal, but totally geodesic.
The horizontal planes have Gauss curvature $K \equiv -1$, while the flats have $K \equiv 0$. 

\subsubsection{Tall rectangles}
The next example is a two-parameter family of minimal surfaces in $\HR$,  each element of which
has asymptotic boundary which is a finite `rectangle' in the vertical boundary of $\Pbound{\HR}$. 
Limiting elements of this family are semi-infinite or infinite rectangles; the semi-infinite ones have
asymptotic boundary which includes an entire geodesic in one of the two horizontal components
of $\Pbound\HR$, while the infinite ones are simply flats, $F = \gamma \times \RR$.  This family was initially described 
by Hauswirth \cite{Hauswirth} and independently Sa Earp and Toubiana \cite{SaEarp-Toubiana}. These surfaces were first
used as a very useful set of barriers in \cite{Mazet-Rodriguez-Rosenberg_err}. 
The name `tall' is due to Coskunuzer \cite{Coskunuzer} and refers to the fact that these only exist when their height is 
greater than $\pi$. 

Fix any arc $c\subset \partial \Hyp^2$ and denote its endpoints by $q_1$ and $q_2$; choose $a, b\in \RRb$
with $a < b$ and $\ell:=b-a > \pi$. There is a unique minimal surface $H=H(c,a,b)\subset \HR$ which is
a horizontal graph over the rectangle $c \times [a,b]$. This surface $H$ has the following properties.
Let $\gamma$ be the geodesic in $\HH^2$ which terminates at $q_1$ and $q_2$. Then 
$H$ is contained in the half-space bounded by $F = \gamma \times \RR$. It is invariant
under the one-parameter group of isometries of $\HH^2$ which fix $q_1$
 and $q_2$; these are isometries of hyperbolic type, and extend in a natural way to isometries of $\HR$ leaving each
level $\{t = \mathrm{const}\}$ fixed.   

Because of this isometry invariance, it is not hard to see that the intersection of $H$ with
any horizontal slice $\HH^2 \times \{t\}$ is a curve in that copy of $\HH^2$ which is equidistant
from the geodesic $\gamma$. Denoting its distance from $\gamma$ by $\rho(t)$, then
$\rho(t) = \infty$ at $t = a, b$, corresponding to the fact that $H \cap (\HH^2 \times \{a\}) = c \times \{a\}$,
and similarly at $t=b$.  For the limiting case $a = -\infty$, $b = \infty$, we can
take $\rho(t) \equiv 0$, so that $H_{c, -\infty, \infty}$ is the flat $F = \gamma \times \RR$. 
When $-\infty < a$ but $b = \infty$, then $\rho(t)$ is decreases monotonically from
$\infty$ to $0$ as $t$ increases from $a$ to $\infty$.  When $a$ and $b$ are both finite,
then $\rho(t)$ is proper and convex on the interval $(a,b)$, and obviously symmetric
around the midpoint $t = (a+b)/2$.   Finally, this minimum value of $\rho$ tends 
to $\infty$ as the overall height $\ell$ of the boundary rectangle decreases to $\pi$. 

All of this can be deduced from an explicit expression for the function $\rho(t)$ involving 
integrals, see \cite{SaEarp-Toubiana}. 
Figure \ref{fig:barriers} illustrates some members of this family of surfaces. 

If $a$ and $b$ are finite, then the asymtotic boundary of $H(c, a,b)$ in the product
compactification is the rectangle with four segments:
\[
 \Pbound H(c, a,b) = (c \times \{a,b\})\cup (\{q_1, q_2\} \times [a,b]).
\]
Similarly, 
\[
 \Pbound H(c, a, \infty) = (c \times \{a\}) \cup ( \{q_1, q_2\} \times [a, \infty]) \cup (\gamma \times \{\infty\}),
\]
and
\[
 \Pbound H(c, -\infty, \infty) = (\{q_1, q_2\}) \times \RRb) \cup (\gamma \times \{\pm \infty\}).
\]

In the geodesic compactification, on the one hand
 when $a$ and $b$ are finite, $\Gbound H(c, a,b)$ is
simply the arc $c$ on the equator. On the other hand 
$\Gbound H(c, a, \infty)$ is the union of $c$ and of the two
Weyl chambers connecting the endpoints of $c$ to the north pole $p^+$, 
$\Gbound H(c, -\infty, b)$
is the union of $c$ and of the two Weyl chambers connecting points of $c$ 
both to $p^+$ and to $p^-$, and $\Gbound H(c,-\infty,\infty)$ is the union 
of the four Weyl chambers having $q_1$ or $q_2$ as endpoints (including 
$p^\pm,q_1,q_2$).

\begin{figure}
\labellist
\small \hair 5pt
\pinlabel $\ell\in(\pi,+\infty)$ [l] at 220 185
\pinlabel $a$ [l] at 257 140
\pinlabel $b$ [l] at 257 227
\pinlabel $c$ [t] at 141 118
\pinlabel $\ell=+\infty$ [l] at 535 203
\pinlabel $a$ [l] at 574 140
\pinlabel $b=+\infty$ [l] at 574 255
\pinlabel $c$ [t] at 452 118
\endlabellist
\centering
\includegraphics[scale=0.6]{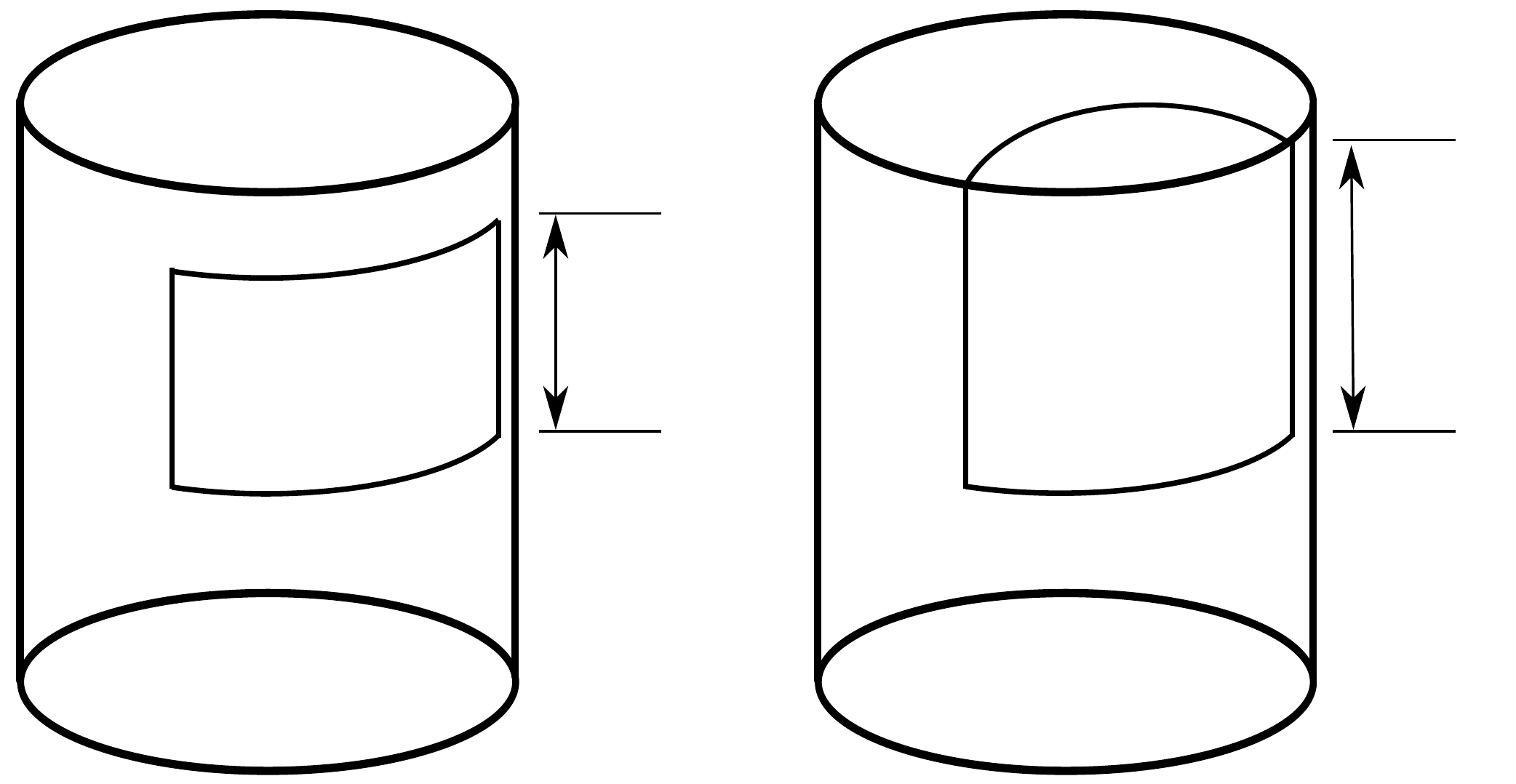}
\caption{The boundary of tall rectangles in the product compactification.}
\label{fig:barriers}
\end{figure}

The expressions for these surfaces are written out in the disk model in \cite{SaEarp-Toubiana}, but  
we find it more useful here to write them in 
the half-plane model. We do so using the minimal surface equation for vertical graphs. 

Use coordinates $(x,y,t) \in \RR^+ \times \RR\times \RR$ and suppose that
the boundary arc is the segment between $q_1=(0,1)$ and $q_2=(0,-1)$. Also let $a=0$. 
The circular arcs with endpoints $q_1,q_2$, which correspond to curves equidistant from the geodesic
connecting $q_1$ and $q_2$, are described by
\[
s(x,y):=\frac{1-(x^2+y^2)}{2x}=\mbox{const.}
\]
We seek graphs $t=u(x,y)$ with $u(x,y)=f(s(x,y))$.  When $b < \infty$, $u$ is multi-valued,
since the entire surface is a bigraph over a concave region in $\HH^2$ bounded by an
equidistant curve. When $b=\infty$, the surface is a single-valued graph 
over a half-space in $\HH^2$. 

One computes 
\begin{align*}
s_x  &  = -1-\frac{s}{x},  &  s_y  &  = -\frac{y}{x}  \\
u_x  &  = (-1-\frac{s}{x})f'(s),  &  u_y  &  = -\frac{y}{x}f'(s)
\end{align*}
Then
\begin{align*}
u_{xy}  &  = \frac{y}{x^2}f'(s)+\frac{y}{x}(1+\frac{s}{x})f''(s)\\
u_{xx}  &  = \frac1x(1+2\frac{s}{x})f'(s) + (1+\frac{s}{x})^2 f''(s) \\
u_{yy}  &  = -\frac1x f'(s) + \frac{y^2}{x^2} f''(s).
\end{align*}
Plugging this into \eqref{eq:horizontal} and using that $y^2+x^2+2xs=1$, we obtain
\begin{equation}
0 = 2sf' + s(1+s^2)f'^3 + (1+s^2) f''. 
\end{equation}
This is a Bernoulli equation in $f'$, which has solutions
\[
f'(s) = \frac{-1}{\sqrt{C-1+(2C-1)s^2+C s^4}}.
\]
where $C\in(0,1)$. This is impossible to integrate in explicit terms, except when $C=1$, which 
corresponds to $b=\infty$, and in that case 
\[
f(s)=\argtanh\left(1+s^2\right)^{-\frac12}.
\]

\subsubsection{Horizontal and vertical catenoids}
As a final set of examples, we list the horizontal and vertical catenoids.   

The horizontal catenoids $\calC_H(p, a,b)$ were first described by Pedrosa and Ritor\'e \cite{Pedrosa-Ritore},
but independently and more explicitly by Nelli and Rosenberg \cite{Nelli-Rosenberg,Nelli-Rosenberg_err}.
Each one of these is  a surface of rotation in $\HR$ around a vertical axis $\{p\} \times \RR$, where the point 
$p$ can be arbitrary in $\HH^2$, and contained in the slab $a \leq t \leq b$. 

Assuming that $p = 0$ and that the catenoid is symmetric with respect to $(t=0)$, 
i.e. $a=-b$, a parametrization of $\calC_H(p,-b,b)$ is
\[
\{ (r(t) \cos\theta, r(t) \sin \theta, t)\},  \quad |t| < b, \ \theta \in S^1,
\]
where $r(t)$, the radial coordinate in the disk model of $\HH^2$, satisfies 
\[
r' = \pm\sqrt{Cr^2-\frac{1+r^4}{4}};
\]
here $C$ is a constant determined by the height $2b$ \cite{Nelli-Rosenberg}. 
These solutions exist if and only if $C>1/2$, which corresponds to the height limitation $2b < \pi$.
If $\varphi: \HH^2 \to \HH^2$ is any isometry, then its extension $(\varphi, \mbox{Id})$ to $\HR$ acts on the
space of horizontal catenoids, sending $\calC_H(p, a,b)$ to $\calC_H( \varphi(p), a,b)$.  If $\varphi_\lambda$ is
a family of hyperbolic isometries which fixes two points $q_\pm \in \del \HH^2$, then the limit of $\calC_H( \varphi_\lambda(p), a,b)$ 
is the union of two horizontal hyperbolic planes $\HH^2\times\{a,b\}$; the neck has disappeared at infinity.
There is a different limit if one lets $\lambda \to \infty$ and simultaneously $b-a \nearrow \pi$. This is called
Daniel's surface \cite{Daniel}, and is a disk which has boundary along the union of two circles at distance
$\pi$ from one another and a straight line connecting these circles. Daniel's surface can also be obtained
as the limit of tall rectangles as $b-a \searrow \pi$ and the arc $c$ converges to the whole circle. 

The vertical catenoids $\calC_V$ are parametrized, by contrast, by pairs of geodesics $\gamma_\pm \subset \HH^2$
such that $\mbox{dist}(\gamma_+, \gamma_-)$ is less than some critical value $\eta_0$. Each of these geodesics 
determines a flat $F_\pm = \gamma_\pm \times \RR$,
which is a vertical plane, and $\calC_V( \gamma_+, \gamma_-)$ is the annular surface which is asymptotic to this
pair of flats.  These were first constructed in \cite{Morabito-Rodriguez} and \cite{Pyo}, but see also \cite{Martin-Mazzeo-Rodriguez} 
for a  careful explanation of their geometrical properties.

In contrast to the other surfaces described above, the horizontal and vertical catenoids have disconnected
boundaries.

\section{Minimally fillable curves on the product boundary}
We now describe broader classes of properly embedded minimal surfaces in $\HR$ which generalize the
examples above in various ways.  Much of this summarizes previously known results, but we present
a few new results too.  

The general motivation for these questions here is the basic one, to determine which curves in $\Pbound\HR$ 
or $\Gbound\HR$ occur as boundaries of properly embedded minimal surfaces $\Sigma\subset\HR$.  We call 
any such curve \emph{minimally fillable}. By curve, we implicitly mean a finite union of disjoint 
\emph{Jordan curve}, i.e., of topological embedding of the circle; the word \emph{arc} signifies a continuous image of an interval.

We also discuss the more restrictive problem of characterizing
curves $\sigma$ for which the filling is not only minimal but area-minimizing, which means that any
compact portion of this surface is absolutely area-minimizing. 

\subsection{Barriers and minimally fillable curves}
Most of the existence theorems rely on some version of the following folklore result: 
\begin{prop} Let $\sigma\subset\Pbound\HR$ be a curve, and assume that every
$p\in \Pbound\HR\setminus\sigma$ can be separated from $\sigma$ by the boundary $\Pbound \Sigma_p$ of
a properly immersed minimal surface $\Sigma_p$. Then $\sigma$  is minimally fillable.
\label{folklore}
\end{prop}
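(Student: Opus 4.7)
The plan is to produce the filling surface as a limit of solutions to a sequence of interior Plateau problems, using the barrier family $\{\Sigma_p\}$ to pin the boundary of the limit to $\sigma$.

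First I would approximate $\sigma$ by a sequence of smooth Jordan curves $\sigma_n$ lying in the interior of $\HR$ and converging to $\sigma$ in the product compactification $\Pcomp\HR$; for example, push $\sigma$ slightly inward along a vector field transverse to $\Pbound\HR$ and smooth. Since $\HR$ is a complete, simply connected manifold of nonpositive sectional curvature, the classical Plateau problem is solvable for each $\sigma_n$: there exists an embedded area-minimizing disk (or, more generally, an area-minimizing integral $2$-current) $\Sigma_n$ with $\partial\Sigma_n=\sigma_n$.

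The heart of the argument is the barrier control. Fix $p\in\Pbound\HR\setminus\sigma$ with separating barrier $\Sigma_p$. Because $\Pbound\Sigma_p$ separates $p$ from $\sigma$ in $\Pbound\HR$, there is a neighborhood $U_p$ of $p$ in $\Pcomp\HR$ whose intersection with $\HR$ lies in a single component of $\HR\setminus\Sigma_p$ disjoint from some product-compactification neighborhood of $\sigma$. For $n$ large, $\sigma_n\subset\Pcomp\HR\setminus\overline{U_p}$, so $\sigma_n$ lies entirely on the ``$\sigma$-side'' of $\Sigma_p$. The touching principle for minimal surfaces (two minimal surfaces tangent at an interior point and lying on one side of each other must coincide) then forces $\Sigma_n$ to remain on the $\sigma$-side of $\Sigma_p$: otherwise we could slide $\Sigma_p$ by a small isotopy, or equivalently take a last point of contact, and reach a contradiction sheet by sheet (this step only uses $\Sigma_p$ locally, so immersedness rather than embeddedness of $\Sigma_p$ suffices). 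Hence $\Sigma_n\cap U_p=\emptyset$ for all large $n$.

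Next I would extract a limit. The $\Sigma_n$ are stable (in fact area-minimizing), so Schoen's curvature estimate, together with the monotonicity formula, yields uniform area and second-fundamental-form bounds on any compact subset of $\HR$ that lies eventually on the $\sigma$-side of finitely many barriers. Passing to a subsequence, $\Sigma_n$ converges smoothly on compact subsets of $\HR$ to a properly embedded minimal surface $\Sigma$. Properness holds because, by the previous paragraph, $\Sigma$ avoids $U_p$ for every $p\in\Pbound\HR\setminus\sigma$, so $\Pbound\Sigma\subseteq\sigma$. The reverse inclusion is immediate: each point of $\sigma$ is a limit in $\Pcomp\HR$ of points $q_n\in\sigma_n\subset\Sigma_n$, so it lies in $\Pbound\Sigma$.

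The main obstacle is nondegeneration of the limit: one must exclude the possibility that $\Sigma_n$ collapses or drifts off to $\Pbound\HR$ entirely, leaving an empty or trivial limit. This is where working in the area-minimizing category is essential: the monotonicity formula gives a lower area density bound on $\Sigma_n$ at any interior point, and comparison with the cones or planar barriers above gives a uniform upper area bound on compact sets, so compactness of integral currents delivers a nontrivial limit. A secondary subtlety is that the barriers $\Sigma_p$ are only immersed and may themselves be noncompact; one handles this by applying the maximum principle locally near putative contact points, as indicated, rather than globally.
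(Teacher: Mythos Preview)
Your approach matches the paper's almost exactly: approximate $\sigma$ by interior curves, solve the Plateau problem for each, use the barriers $\Sigma_p$ together with the maximum principle to prevent accumulation at points of $\Pbound\HR\setminus\sigma$, and pass to a limit; the paper gives only a short folklore sketch of this, so your version is actually more detailed. One small slip worth fixing: your argument for the reverse inclusion $\sigma\subseteq\Pbound\Sigma$ does not work as written, since $q_n\in\sigma_n\subset\Sigma_n$ (not $\Sigma$) only shows $q$ lies in the closure of $\bigcup_n\Sigma_n$; the standard remedy is a linking argument (an arc in $\Pcomp\HR$ joining the two sides of $\sigma$ must intersect each $\Sigma_n$ in a fixed compact set, hence also $\Sigma$), and this same linking argument is what genuinely rules out collapse of the limit --- the paper alludes to it with the remark that surfaces escaping every compact would have to accumulate somewhere off the curve $\sigma$.
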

The proof of this proceeds as follows. Let $\sigma_j$ be a sequence of curves in $\HR$ which approach $\sigma$,
and for each $j$, let $\Sigma_j$ be a solution of the Plateau problem (or indeed any other minimal surface) with
$\del \Sigma_j = \sigma_j$. The key step in showing that the $\Sigma_j$ converge to a solution of our problem
is to ensure that $\Sigma_j$ does not leave every compact set as $j \to \infty$. However, regarding $\Sigma_j$
in $\Pcomp{\HR}$, we see that it is impossible for this sequence to have any limit points $p$ which do not lie on 
$\sigma$, since the surfaces $\Sigma_p$ serve as barriers which prevent $\Sigma_j$ from having $p$ as a limit point. Since $\sigma$ is a curve,
any sequence of surfaces escaping all compacts
and whose boundaries approach $\sigma$ must have an accumulation point
outside $\sigma$, completing the proof.

A recent result by Coskunuzer \cite{Coskunuzer} settles part of the problem of characterizing curves which are contained
in the vertical boundary of $\Pbound\HR$ and which are fillable by {\it minimizing} rather than just \emph{minimal} surfaces. 
To state his result, we recall his terminology that a curve $\sigma \subset \del \HH^2 \times \RR$ 
is called {\it tall} if $(\del \HH^2 \times \RR) \setminus \sigma$ is a union of tall rectangles, i.e., 
rectangles of the form $c \times [t_1, t_2]$, where $c$ is an arc in
$\del \HH^2$ and $t_2-t_1 > \pi$.  Next, define the height $h(\sigma)$ of the curve $\sigma$
to be the infima of lengths of the bounded components of $(\{\theta_0\} \times \RR) \cap \sigma$. He proves the following
\begin{prop}[\cite{Coskunuzer}]
A (possibly disconnected) curve $\sigma \subset \del \HH^2 \times \RR$ with $h(\sigma) \neq \pi$ is the boundary 
of a properly embedded
area-minizing surface $\Sigma$ if and only if $\sigma$ is tall. 
\label{Cosprop}
\end{prop}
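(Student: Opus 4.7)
The plan is to prove both directions using the tall-rectangle minimal surfaces $H(c,a,b)$ from the previous section as barriers. For the sufficiency direction (tall $\Rightarrow$ area-minimizing fillable), the fact that $\sigma$ is tall means that every component of $(\del\HH^2\times\RR)\setminus\sigma$ is a rectangle $c\times[t_1,t_2]$ with $t_2-t_1>\pi$, and hence is the vertical asymptotic boundary of a tall-rectangle minimal surface $H(c,t_1,t_2)$. Together with horizontal slices $\HH^2\times\{t\}$ handling the horizontal caps, these form a collection of minimal barriers separating $\sigma$ from every point of $\Pbound\HR\setminus\sigma$. I would approximate $\sigma$ by smooth Jordan curves $\sigma_j\subset\HR$, solve the classical area-minimizing Plateau problem to produce embedded minimizers $\Sigma_j$ with $\del\Sigma_j=\sigma_j$, and pass to a subsequential limit: the barriers force precompactness of $\{\Sigma_j\}$ away from $\Pbound\HR\setminus\sigma$, and standard compactness of area-minimizing integral currents together with lower semicontinuity of area yields an area-minimizer $\Sigma$ with $\Pbound\Sigma=\sigma$, embedded by interior regularity in dimension three.

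For the necessity direction, suppose $\Sigma$ is area-minimizing with $\Pbound\Sigma=\sigma$, and assume toward contradiction that $\sigma$ is not tall. Then the complement $(\del\HH^2\times\RR)\setminus\sigma$ contains either a non-rectangular component or a rectangle of height at most $\pi$; in either case, using $h(\sigma)\neq\pi$ to avoid the borderline, one extracts a vertical segment $\{\theta_0\}\times[t_1,t_2]$ of length strictly less than $\pi$ with both endpoints on $\sigma$. I would then produce an area-decreasing competitor by capping $\Sigma$ off with two horizontal disks across a short cylindrical neck near this gap: cut $\Sigma$ along a horizontal plane $\HH^2\times\{t_0\}$ for some $t_0\in(t_1,t_2)$ and replace the piece of $\Sigma$ abutting the gap with two horizontal disks inside the slab. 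The nonexistence of tall rectangles of height $\leq\pi$ is precisely what forces the replaced piece of $\Sigma$ to have strictly greater area than the two flat caps, contradicting area-minimality.

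The main obstacle is the converse direction: upgrading the threshold observation from an asymptotic statement about barriers to a concrete finite-area comparison on the global minimizer $\Sigma$ requires careful control of how $\Sigma$ approaches $\sigma$ near the short gap, and in particular ruling out arbitrarily complicated neck-type accumulation before the horizontal-plane slicing argument can be applied. The hypothesis $h(\sigma)\neq\pi$ is essential because at $h(\sigma)=\pi$ the tall-rectangle barriers degenerate to Daniel's surface, the flat and vertical capping competitors have asymptotically equal area, and the comparison argument collapses — so at the borderline both implications can genuinely fail.
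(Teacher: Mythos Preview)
Your proposal aligns well with the paper's treatment, which is itself only a sketch citing Coskunuzer. For the existence direction you reproduce exactly the argument the paper indicates: approximate $\sigma$ by compact curves $\sigma_j$, take area-minimizing fillings $\Sigma_j$, and use the tall rectangles $H(c,t_1,t_2)$ (plus horizontal slices for the caps) as barriers to extract a convergent subsequence. This is precisely what the paper says.

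For the necessity direction the paper says only that ``nonexistence when $\sigma$ is short is proved by a cut and paste argument which uses strongly the fact that one is seeking a minimizing filling $\Sigma$''. Your proposal is in the same spirit, and you are right to flag it as the subtle half. One point worth sharpening: your description ``cut $\Sigma$ along a horizontal plane $\HH^2\times\{t_0\}$ \dots and replace the piece \dots with two horizontal disks'' is a bit garbled as written --- cutting along a single plane does not produce a region to cap with \emph{two} disks, and both $\Sigma$ and the competitor have infinite area, so the comparison must be localized (e.g.\ to a solid cylinder or half-space) before one can speak of a strict area decrease. Coskunuzer's actual argument does exactly this kind of localized swap near the short gap, exploiting that no least-area filling of a short rectangle exists; your sketch points in the right direction but would need that localization made explicit. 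Your remark about $h(\sigma)=\pi$ being the degenerate borderline is correct and matches the role of that hypothesis.
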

The proof of existence when $\sigma$ is tall is much the same as above: take a sequence of curves $\sigma_i \subset \HR$ 
converging to $\sigma$, and find an area-minizing surface $\Sigma_i$ with $\del \Sigma_i = \sigma_i$ for each $i$. 
One then uses tall rectangles as barriers to show that some subsequence of the $\Sigma_i$ converge to a solution of the problem. 
Nonexistence when $\sigma$ is short is proved by a cut and paste argument which uses strongly the fact that one
is one is seeking a minimizing filling $\Sigma$. 

This result does not guarantee that the surface $\Sigma$ has only one component; for example, if $\sigma$ consists 
of a pair of horizontal circles which are sufficiently far apart, then the (unique) minimizing surface bounded by them
is the pair of horizontal disks.  

The criteria for existence of minimal fillings are clearly different: to return to the example where $\sigma
 = S^1 \times \{t_1, t_2\}$, suppose now that $t_2 - t_1 < \pi$ so that $\sigma$ is short. Then there is no 
minimizing filling, but on the other hand there is a minimal catenoid $\Sigma$ with $\Pbound \Sigma = \sigma$. 

Many interesting questions remain open.  We discuss a few of these after presenting several types of existence results. 

\subsection{Vertical and horizontal graphs}
The direct generalization of the family of horizontal disks $\HH^2 \times \{a\}$ are the vertical graphs.
It is reasonable to expect a general existence theorem for solutions of the minimal surface equation
for vertical graphs over $\HH^2 \times \{0\}$, based on the well-known solvability of the 
asymptotic Plateau problem for harmonic functions on $\HH^2$, and this is indeed the case. 
Let $u_0: S^1 \to \RR$ be any $\calC^0$ function.  Nelli and Rosenberg \cite{Nelli-Rosenberg} proved that 
there exists a solution $u: \HH^2 \to \RR$ to \eqref{eq:horizontal}, such that $u$ extends
continuously to $\overline{\HH^2}$ with $\left. u \right|_{\del \HH^2} = u_0$. This solution is unique. 
By Proposition~\ref{bregver}, $u \in \calC^{k,\alpha}(\overline{\HH^2}) \cap \calC^\infty(\HH^2)$ 
if $u_0 \in \calC^{k,\alpha}(S^1)$. 

The analogous problem of finding solutions which are graphs over vertical flats $F = \gamma \times \RR$ 
is less developed.  One point is that the possibly minimally fillable boundaries are more constrained. 
We have already quoted Coskunuzer's result, Proposition~\ref{Cosprop}, but an earlier 
result by Sa Earp and Toubiana \cite{SaEarp-Toubiana} presents a rather odd restriction, that if $\sigma$ has
a `thin tail', then $\sigma$ has no minimal filling. By definition a thin tail in $\sigma$ is an open subarc $c \subset \sigma$
which lies entirely on one side of some vertical line $\{p\} \times \RR$, $p \in \del \HH^2$ except
at some interior point or segment of the arc where it intersects this line, and such that $c$ lies within a horizontal slab 
$\del \HH^2 \times [t_1, t_2]$ with $t_2 - t_1 < \pi$. 

Another restriction concerns the behavior of horizontal graphs near the top and bottom caps
$\HH^2 \times \{\pm \infty\}$. 
\begin{prop}
Suppose that $\sigma\subset\Pbound\HR$ is a curve, and denote by 
\[
\sigma^\pm=\{p\in\HH^2 \mid (p,\pm\infty)\in \sigma\}
\]
the intersections of $\sigma$ with the upper and lower caps. 
If $\sigma$ is minimally fillable, then $\sigma^\pm$ are unions 
of disjoint geodesics in $\HH^2$. 
\end{prop}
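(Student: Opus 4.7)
The plan is to analyze $\sigma^+$ via a vertical translation limit; the statement for $\sigma^-$ follows by the symmetry $t\mapsto -t$. Let $\Sigma$ be a properly embedded minimal surface with $\Pbound\Sigma=\sigma$ and fix any $p\in\sigma^+$. By definition there exist $(p_n,t_n)\in\Sigma$ with $p_n\to p$ in $\HH^2$ and $t_n\to+\infty$. Using the vertical translations $T_s(z,t):=(z,t+s)$, set $\Sigma_n:=T_{-t_n}\Sigma$, which is a properly embedded minimal surface through $(p_n,0)$. The goal is to extract a subsequential limit of the $\Sigma_n$ and identify it as a vertical flat $\gamma\times\RR$ through $(p,0)$, from which the sought geodesic $\gamma\subseteq\sigma^+$ can be read off.

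Invoking standard local compactness for properly embedded minimal surfaces in an ambient $3$-manifold of bounded geometry (Colding--Minicozzi lamination convergence, together with Allard $\varepsilon$-regularity for the resulting varifold limit), we pass to a subsequence so that $\Sigma_n$ converges to a minimal lamination $\mathcal{L}$ of $\HR$ whose support contains $(p,0)$. Let $L\subset\mathcal{L}$ be the leaf through $(p,0)$; it is a smooth complete embedded minimal surface. The key inclusion is $L\subseteq\sigma^+\times\RR$: any $(q,r)\in L$ is a limit of points $(q_n,r_n)\in\Sigma_n$, so $(q_n,r_n+t_n)\in\Sigma$ with $q_n\to q$ and $r_n+t_n\to+\infty$, and properness of $\Sigma$ in $\Pcomp{\HR}$ then forces $(q,+\infty)\in\Pbound\Sigma=\sigma$, i.e.\ $q\in\sigma^+$.

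Because $\sigma$ is a disjoint union of Jordan curves, $\sigma^+\subset\HH^2$ has topological dimension at most one, hence empty interior in $\HH^2$. If the differential $d\pi_{\HH^2}$ restricted to $TL$ had rank $2$ at some point, the implicit function theorem would make $\pi_{\HH^2}(L)\subseteq\sigma^+$ contain an open subset of $\HH^2$, which is impossible. So this rank is at most one everywhere, which is equivalent to $\del_t\in T_xL$ at every $x\in L$. Integrating, $L$ is a union of vertical lines, so $L=\gamma\times\RR$ for a complete curve $\gamma\subset\HH^2$; a direct mean-curvature computation for vertical cylinders shows that $L$ is minimal precisely when $\gamma$ has vanishing geodesic curvature, i.e.\ is a geodesic. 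Since $(p,0)\in L$ gives $p\in\gamma$, and $\gamma\subseteq\sigma^+$, through every point of $\sigma^+$ passes a complete geodesic lying in $\sigma^+$. Two such geodesics meeting at a point would make $\sigma$ self-intersect at the corresponding point of the upper cap, contradicting the embedded Jordan structure of $\sigma$; hence $\sigma^+$ is a \emph{disjoint} union of complete geodesics of $\HH^2$.

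The principal technical obstacle is the compactness step. We have no a priori curvature bound on the translates $\Sigma_n$ at $(p_n,0)$, so in full generality the subsequential limit is only a minimal varifold, possibly with higher multiplicity or isolated singularities. Its sole role in the argument, however, is to supply a single smooth minimal leaf through $(p,0)$ contained in $\sigma^+\times\RR$; since embeddedness of each $\Sigma_n$ gives unit density a.e.\ and the one-dimensional upper bound on $\sigma^+$ pins the local dimension of the limit, Allard's $\varepsilon$-regularity furnishes the desired smoothness at $(p,0)$. Everything after that step reduces to dimension counting together with the elementary characterization of vertical-translation-invariant minimal surfaces in $\HR$ as vertical flats $\gamma\times\RR$.
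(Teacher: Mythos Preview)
Your argument follows essentially the same route as the paper: translate $\Sigma$ vertically, pass to a subsequential limit, observe that the limit is contained in $\sigma^+\times\RR$, and conclude that it is a vertical cylinder over a geodesic. The paper's write-up is terser and uses a slightly different trick at the key step: rather than your rank argument for $d\pi_{\HH^2}$, it notes that replacing the sequence $s_j$ by $s_j+a$ for any fixed $a$ yields the \emph{same} limit, so the limit surface is invariant under all vertical translations and therefore equals $\sigma^+\times\RR$ outright. Both arguments arrive at the same place; yours is a bit more local (one leaf through one point at a time), the paper's gets the whole of $\sigma^+$ in one stroke.

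One genuine quibble: your final paragraph's justification of smoothness at $(p,0)$ via Allard does not hold together. Embeddedness of each $\Sigma_n$ does \emph{not} prevent the varifold limit from having higher multiplicity, and the ``one-dimensional upper bound on $\sigma^+$'' is a statement about the topological dimension of a subset of $\HH^2$, not about varifold density in $\HR$; it gives no input to Allard's hypothesis. If you want to make the compactness step rigorous, stay with the lamination route you already invoked: Colding--Minicozzi in an ambient $3$-manifold of bounded geometry gives a limit minimal lamination (with a possible singular curve in the unbounded-curvature case), and in either alternative there is a smooth leaf through $(p,0)$ to which your rank argument applies verbatim. The paper, for what it's worth, glosses over this same point entirely (``Clearly, $\Sigma_{s_j}$ converges to a complete minimal surface''), so the gap is shared; but your attempted patch via Allard is not the right one.
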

\begin{proof}
Let $\Sigma$ be an embedded minimal surface with $\sigma=\Pbound \Sigma$. Setting $T_s(x,y,t)=(x,y,t-s)$,
then write $\Sigma_s=T_s(\Sigma)$.  

By definition of $\Pcomp{\HR}$, if $p$ is any point in $\sigma^+$ and $s_j$ any sequence
which tends to infinity, then there exist $p_j \in \HH^2$ such that $(p_j,0) \in \Sigma_{s_j}$ and 
$p_j \to p$.  Clearly, $\Sigma_{s_j}$ converges to a complete minimal surface $\Sigma^+$ which contains $(p,0)$,
and since this is true for any $p \in \sigma^+$ for the same sequence $s_j$, the surface $\Sigma^+$ contains 
$\sigma^+$.  Replacing the sequence $s_j$ by $s_j + a$ for any fixed $a \in \RR$, we deduce that
$\Sigma^+$ equals $\sigma^+\times\RR$.  It is straightforward to check that this product is minimal
if and only if $\sigma^+$ be a union of disjoint geodesics.

The same conclusion obviously holds for $\sigma^-$.
\end{proof}

The two restrictions we have now seen are equivalent to the following. Using upper half-plane coordinates on 
$\HH^2$, if $\Sigma$ is a minimal graph over $F = \{y = 0\}$ with graph function $v(x,t)$, $v(\cdot,t)$ tends to 
a hyperbolic geodesic as $|t| \to \infty$, and moreover, the restriction $v(0,t)$ cannot have any local maxima or minima 
in any interval $[t_1, t_2]$ with $t_2 - t_1 < \pi$ (this is the nonexistence of thin tails).  These suggest that it 
may not be easy to formulate sharp conditions for the existence of minimal horizontal graphs. 

There are, however, some interesting nontrivial solutions.
\begin{prop}\label{theo:fillability}
Suppose that $\sigma\subset\Pbound\HR$ is a curve such that each of the arcs $\sigma^\pm$ at the top and bottom
caps are either empty or a single complete geodesic in $\HH^2$, and in addition, for each vertical line 
$L \subset \Pbound\HR$, the components of $L \setminus\sigma$ are intervals of length greater than $\pi$,
then $\sigma$ is minimally fillable.
\end{prop}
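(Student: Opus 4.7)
The plan is to invoke Proposition~\ref{folklore}: for each $p\in\Pbound\HR\setminus\sigma$ I will exhibit a properly embedded minimal surface whose asymptotic boundary separates $p$ from $\sigma$ in $\Pbound\HR$. The key building blocks are the tall rectangles $H(c,a,b)$ together with their semi-infinite limits. The vertical-line hypothesis (each component of $L\setminus\sigma$ has length $>\pi$) is exactly the height condition for such rectangles to exist, while the hypothesis that $\sigma^\pm$ is empty or a complete geodesic is what will allow me to use the cap geodesic in the asymptotic boundary of a semi-infinite tall rectangle as a separating curve in the appropriate cap.

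For $p=(q,t_0)\in\partial\Hyp^2\times\RR$, let $(a,b)\ni t_0$ be the component of $\{q\}\times\overline{\RR}\setminus\sigma$ containing $p$; by assumption $b-a>\pi$. I would pick $a<a'<t_0<b'<b$ with $b'-a'>\pi$, and then use compactness of $\sigma$ to shrink a small arc $c\subset\partial\Hyp^2$ containing $q$ in its interior until the rectangular curve $(c\times\{a',b'\})\cup(\partial c\times[a',b'])$ becomes disjoint from $\sigma$. The tall rectangle $H(c,a',b')$ is then the required barrier.

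For $p=(p_0,+\infty)$ in the interior of the upper cap (the lower cap is symmetric), I would split on $\sigma^+$. If $\sigma^+=\gamma^+$ is a complete geodesic, choose a geodesic $\gamma'\subset\Hyp^2$ close to $\gamma^+$ which separates $p_0$ from $\gamma^+$, and use a semi-infinite tall rectangle $H(c,a,+\infty)$ where $c$ is a suitable subarc of $\partial\Hyp^2$ on the $p_0$ side with endpoints slightly inside the endpoints $q_\pm$ of $\gamma^+$; the geodesic $\gamma'\times\{+\infty\}$ appearing in the asymptotic boundary separates $p$ from $\sigma^+$ inside the upper cap. If $\sigma^+=\emptyset$, then either $\sigma$ is disjoint from the upper corner (and any sufficiently high horizontal slice $\Hyp^2\times\{a\}$ serves as a barrier) or $\sigma$ meets the upper corner at generically finitely many points (and a semi-infinite tall rectangle again works, with $c$ chosen to avoid those approach directions). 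Points of $\Pbound\HR\setminus\sigma$ lying in the corners are handled as limiting instances of the above.

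The main technical obstacle is showing that in the cap case the parameters $\gamma'$, $c$, and $a$ can be tuned so that the whole asymptotic boundary of $H(c,a,+\infty)$ is disjoint from $\sigma$. The vertical sides $\{q_1',q_2'\}\times[a,+\infty]$ would be handled by perturbing $q_1',q_2'$ generically so that $(q_i',+\infty)\notin\sigma$ and then applying the vertical-line hypothesis to find an upper component of $\{q_i'\}\times\overline{\RR}\setminus\sigma$ of infinite length in which $a$ can be placed. The bottom arc $c\times\{a\}$ would be handled by shrinking $c$ to a proper subarc whose endpoints stay at positive distance from $q_\pm$, so that $\sigma$'s approach to $(q_\pm,+\infty)$ necessarily occurs outside $c$, and then taking $a$ larger still. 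The tension is that $c$ must remain large enough for its bounding geodesic $\gamma'$ to continue to separate $p_0$ from $\gamma^+$; reconciling this with the other requirements is where the $>\pi$ hypothesis and the geodesic shape of $\sigma^+$ are essentially used.
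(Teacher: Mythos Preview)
Your approach is essentially the paper's: invoke Proposition~\ref{folklore}, using finite tall rectangles as barriers for points on the vertical boundary and semi-infinite ones (or a horizontal disk when $\sigma^\pm=\emptyset$) for points on the caps. One simplification worth noting: in the cap case there is no need to take $\gamma'$ close to $\gamma^+$ and hence no ``tension'' to manage---the paper fixes \emph{any} geodesic $\gamma$ separating $p_0$ from $\sigma^+$, takes $c$ to be the arc of $\partial\HH^2$ on the $p_0$-side joining its endpoints (which then automatically avoids the endpoints of $\gamma^+$, so $\sigma$ meets $\overline{c}\times\RR$ only at bounded heights), and observes that $\Pbound H(c,a,+\infty)$ is disjoint from $\sigma$ once $a$ is sufficiently large, with no further adjustment of $c$ required.
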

As an example, $\sigma$ could be the union of two geodesics on the upper and lower caps
along with two arcs lying along $\del \HH^2 \times \RR$ which are monotone with respect
to $t \in \RR$ and which connect the respective endpoints of these geodesics, and which do 
not have thin tails, see Figure \ref{fig:twisted}. 
\begin{proof}
We apply Proposition \ref{folklore}. To do so, we must show that if $p\in\Pbound \HR\setminus\sigma$, 
then there is a curve which separates $p$ from $\sigma$ and which is minimally fillable. 

If $p \in \del\HH^2\times\RR$, let $L$ be the vertical line containing $p$ and $\{q\}\times (a,b)\subset L$ 
the connected component of $L\setminus\sigma$ containing $p$. By hypothesis $b-a>\pi$, so there is a small arc
$c\ni q$ and a tall rectangle $\Pbound H(c,a+\varepsilon,b-\varepsilon)$ separating $p$ from $\sigma$, as desired.

Next, assume $p \in \overline{\HH^2}\times\{\infty\}$, say. If $\sigma^+$ is empty, we can separate $p$ from 
$\sigma$ by a horizontal circle $\Pbound \HH^2\times\{t\}$, which is fillable by the horizontal disk. Otherwise,
let $\gamma$ be a geodesic in $\HH^2$ separating $p$ from $\sigma^+$. If $c$ is the arc of $\del\HH^2$ which 
joins the endpoints of $\gamma$ without meeting $\sigma^+$, then for sufficiently large $a$, the 
the semi-infinite tall rectangle $\Pbound H(c,a,\infty)$ separates $p$ from $\sigma$.
\end{proof}

\begin{figure}[Ht]
\labellist
\small \hair 3pt
\pinlabel $p$ [r] at 117 250
\pinlabel $p$ [r] <2pt,0pt> at 20 135
\pinlabel $\sigma$ [l] at 82 119
\pinlabel $\sigma^+$ <0pt,-3pt> [br] at 106 270
\pinlabel $\sigma^-$ [bl] at 126 33
\endlabellist
\centering
\includegraphics[scale=.5]{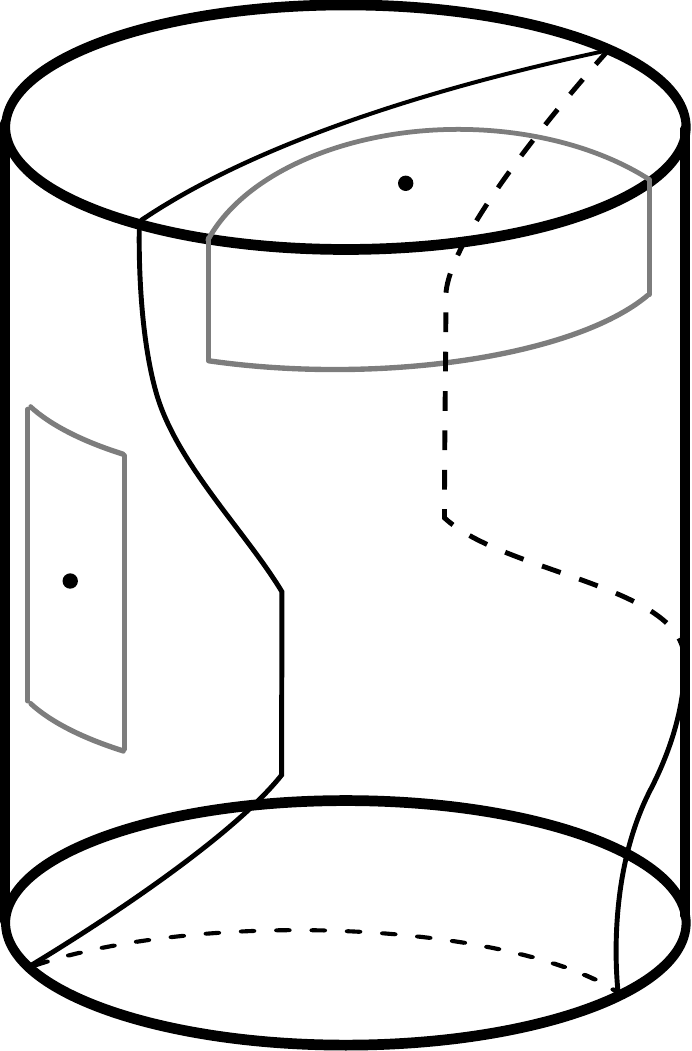}
\caption{The curve $\sigma$ is fillable, since the points $p\notin\sigma$ can be enclosed by tall rectangles.}
\label{fig:twisted}
\end{figure}

Note that the examples given by Proposition \ref{theo:fillability} are different than the various generalized 
helicoids that have been previously constructed. Indeed, there is greater flexiblity here, e.g. the winding of the
vertical arcs around $\partial\HH^2\times \RR$ is relatively unconstrained. while on the other hand, 
helicoids can cut the vertical lines into short intervals. One important difference is that the boundary of 
helicoids in $\Pbound \HR$ contains the entire caps at $t = \pm \infty$; the examples here only make a 
finite number of turns, which avoids this issue.

\subsection{Contractible curves on the vertical boundary} 
Nullhomotopic curves $\sigma$ on $\del \HH^2 \times \RR$ exhibit rather different minimal fillability properties.  
The tall rectangles and their fillings discussed above are basic models in this class.  We have already described Coskunuzer's 
result, characterizing which of these have minimizing fillings, and also pointed out the basic obstruction that any curve with 
a thin tail has no minimal filling. 

We describe here another class of examples, which we call butterfly curves. These illustrate that in this setting too it
may be difficult to fully characterize the fillable nullhomologous curves. Our discovery of these curves below was
one of the starting points of the present work; however, in the intervening time, Coskunuzer's paper appeared and it
contains a similar class of examples. 

Fix positive reals $\ell<\pi<L$, numbers $a,b$ such that $a<b$, $b+\ell<a+L$ and four cyclically ordered points 
$q_1,\dots,q_4$ on $\partial\Hyp^2$. We define the butterfly curve associated to these numbers to be the 
concatenation $\sigma$ of \begin{itemize}
\item the vertical segments $\{q_1\}\times[a,a+L]$ and $\{q_4\}\times[a,a+L]$; 
\item the four horizontal arcs 
\[
\overset{\frown}{q_1 q_2}\times \{a, L\}, \quad \overset{\frown}{q_3 q_4} \times \{a, L\};
\]
\item the four vertical segments 
\[
\{q_2, q_3 \}\times[a,b], \quad \{q_2, q_3 \}\times[b+\ell, a+L]; 
\]
\item the two horizontal arcs 
\[
\overset{\frown}{q_2 q_3} \times \{ b, b+\ell \}.
\]
\end{itemize}
See Figure \ref{fig:butterfly} for an illustration.  Observe that this curve does not satisfy the criterion in 
Theorem \ref{theo:fillability} because the vertical distance between the two last horizontal arcs is less than $\pi$.

\begin{figure}[tb]
\labellist
\small \hair 4pt
\pinlabel $q_1$ [t] <1pt,0pt> at 61 115
\pinlabel $q_2$ [t] at 121 113
\pinlabel $q_3$ [t] <2pt,0pt> at 156 120
\pinlabel $q_4$ [t] <-1pt,-2pt> at 190 136
\pinlabel $a$ [r] at 61 116
\pinlabel $a+L$ [r] at 61 200
\pinlabel $b$ [r] at 121 143
\pinlabel $b+\ell$ [r] at 121 174
\pinlabel $\ell$ [l] at 160 164
\pinlabel $L$ [l] at 213 182
\endlabellist
\centering
\includegraphics[scale=0.6]{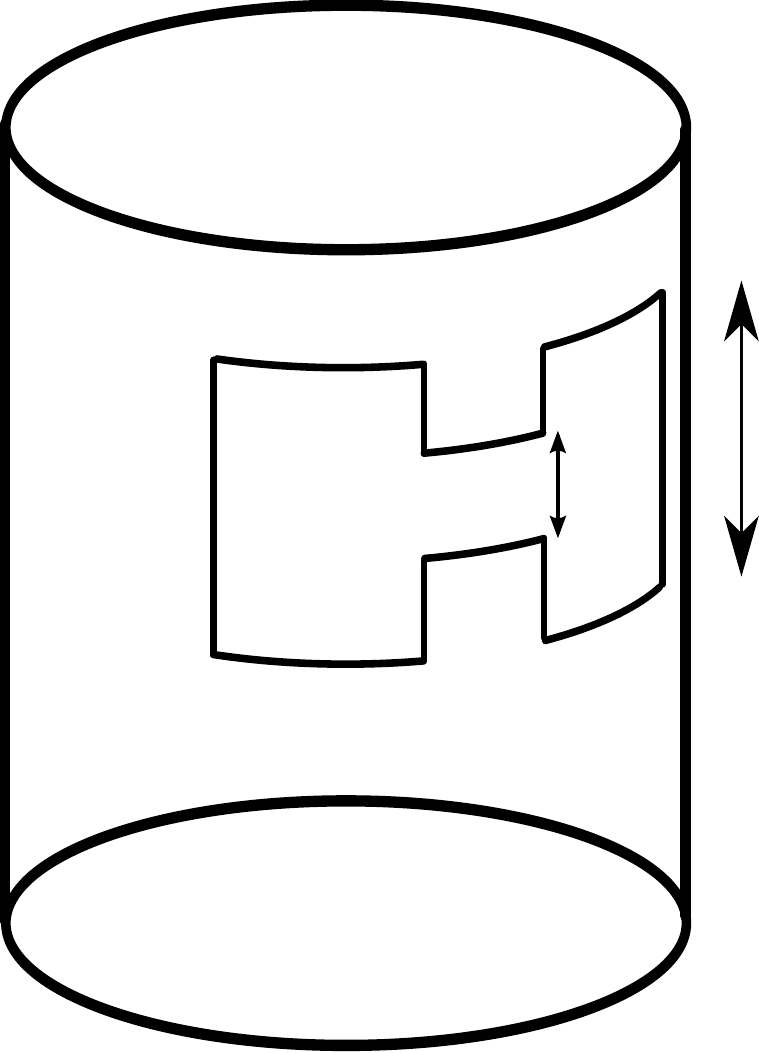}
\caption{An $(\ell,L)$ butterfly curve.}
\label{fig:butterfly}
\end{figure}

\begin{theo}\label{theo:butterfly}
For every $\ell\in(0,\pi)$, there exists an $L>\pi$ and a butterfly curve with the
parameters $(\ell,L)$ which is minimally fillable.
\end{theo}
\begin{proof}
We use a slight variant of Proposition~\ref{folklore}, using a composite barrier formed by two tall rectangles 
and a portion of a catenoid, the interior boundary of which lies in the tall rectangles.  This barrier
prevents convergence of approximations to points on $\del \HH^2 \times \RR$ inside $\sigma$;
the tall rectangles alone provide similar barriers for points outside $\sigma$.

So, let $\gamma_1$ and $\gamma_2$ be the geodesics in $\HH^2$ which connect $q_1$, $q_2$, and $q_3$, $q_4$, respectively, 
Next, choose a horizontal catenoid $C$ of height $\ell$, the projection of which onto $\RR$ is the interval $[b,b+\ell]$ and
such that the disk $D \subset \HH^2$ which is the complement of the projection of $C$ to the $\HH^2$ factor intersects 
both $\gamma_1$ and $\gamma_2$, see Figure \ref{fig:composite}.
Note that such choices can be made as soon as the hyperbolic
diameter of $D$ is larger than the distance between $\gamma_1$ and
$\gamma_2$, so that there is a trade-off between the parameter $\ell$
and the cross-ratio of $q_1,q_2,q_3,q_4$: to realize a small $\ell$,
one has to start with $q_1$ close to $q_4$ or $q_2$ close to $q_3$.

Finally, let $A$ be the angular section with the same
center as $D$, and such that the interior boundary of $C \cap (A\times\RR)$ 
lies on the same side of the flats $\gamma_1\times\RR$ 
and $\gamma_4\times\RR$ as the arcs $\overset{\frown}{q_1q_2}$ and $\overset{\frown}{q_3q_4}$, respectively.

\begin{figure}[tb]
\labellist
\small \hair 3pt
\pinlabel $q_1$ [br] at 45 198
\pinlabel $q_2$ [tr] at 88 6
\pinlabel $q_3$ [t] at 112 4
\pinlabel $q_4$ [bl] at 202 171
\pinlabel $\gamma_1$ [bl] at 70 161
\pinlabel $\gamma_4$ [tl] at 171 147
\pinlabel $D$ at 106 128
\pinlabel $A$ at 141 47
\endlabellist
\centering
\includegraphics[scale=0.6]{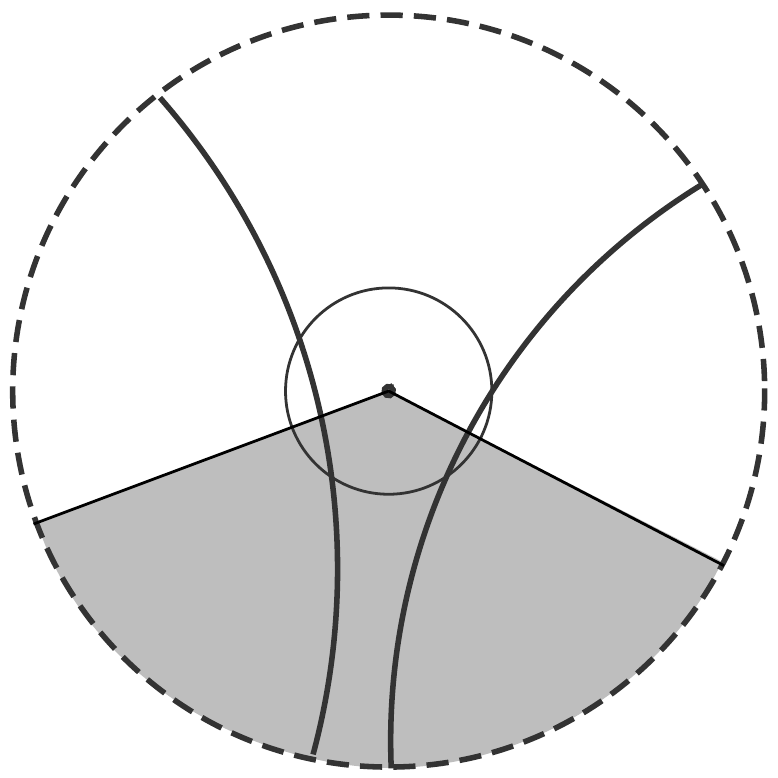}
\caption{Construction of the composite barrier, viewed projected in $\Hyp^2$.}
\label{fig:composite}
\end{figure}

Now choose $L$ sufficiently large so that the tall rectangles $H, H'$ of height $L$ with the
same plane of symmetry as $C$ approximate these flats closely enough so that these too 
separate the interior boundary of $C'$ from the center of $D$. 
This is always possible, but depending on the choices of $\ell$ and 
$q_1,q_2,q_3,q_4$, the diameter of $D$ can be barely
larger than the distance between $\gamma_1$ and $\gamma_2$, so that
it might be necessary to take a very large $L$.

The union of $C'$, $H$ and $H'$ 
has the required properties and has this $(\ell,L)$ butterfly curve as its product boundary.
\end{proof}

Note that for each $\ell<\pi$, one could in principle compute
an explicit optimal bound $L\ge f(\ell)$, because the 
hyperbolic diameter $d$ of $D$ can be computed in terms of $\ell$,
and the distance $r$ from $H(c,a,b)$ to its limiting flat
can be computed in terms of $L$. Choosing $q_1,q_2,q_3,q_4$
adequately, we only need $2r<D$; but these computation would probably
not yield an explicit bound, as both $d$ and $r$ are obtained
from $\ell$ and $L$ through integrals without closed form.

\subsection{Horizontal annuli} 
The horizontal catenoids discussed earlier are minimal, but not minimizing, surfaces which
have boundary $\sigma = \del \HH^2 \times \{t_1, t_2\}$, where $|t_2 - t_1| < \pi$.  Conversely,
any minimal surface $\Sigma$ with $\del \Sigma$ equal to a pair of horizontal circles
must be one of these horizontal catenoids, or a translate of one by a horizontal isometry
of $\HR$, i.e., an isometry which fixes the $t$ component, see \cite{Ferrer-Martin-Mazzeo-Rodriguez}. 

This suggests that there should be a reasonable existence theory for minimal surfaces which
have boundary equal to a union of disjoint two curves $\sigma_1$ and $\sigma_2$, which
are each vertical graphs on $\del \HH^2$, and which are not too far apart. 
Recent work by the second author, Ferrer, Martin and Rodriguez \cite{Ferrer-Martin-Mazzeo-Rodriguez} shows that
this is indeed the case.  We say that the two curves $\sigma_1$ and $\sigma_2$ are
a short pair if the separation between them along any vertical line on $\del \HH^2$
is less than $\pi$. Then \cite{Ferrer-Martin-Mazzeo-Rodriguez} shows that there is an open dense set in the space
of short pairs which bound a minimal annulus.  However, not every short pair is
minimally fillable. One example, explained in \cite{Ferrer-Martin-Mazzeo-Rodriguez}, is if the $\sigma_j$
are the intersections of $S^1 \times \RR$ with a pair of planes, one sloping up and
the other down, separated by a vertical distance less than $\pi$. 

\subsection{Vertical surfaces of finite total curvature} 
Another class of minimal surfaces in $\HR$ are those which have vertical ends, i.e., ends which
are asymptotic to unions of vertical flats. The prototype and building blocks for these surfaces 
are the vertical catenoids.  The following result is proved in \cite{Martin-Mazzeo-Rodriguez}.
\begin{prop}
For each $g \geq 0$ there is a $k_0 = k_0(g)$ such that if $k \geq k_0$, then there exists a properly 
embedded minimal surface with finite total curvature and vertical ends in $\HH^2 \times \RR$, with
genus $g$ and $k$ ends.
\end{prop}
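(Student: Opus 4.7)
The plan is to construct the desired surfaces by a singular perturbation/gluing scheme whose building blocks are the vertical catenoids $\calC_V(\gamma_+,\gamma_-)$ and the vertical flats $F=\gamma\times\RR$ introduced earlier. Since every vertical catenoid has finite total curvature and is asymptotic to a pair of flats, any surface produced by gluing finitely many such pieces automatically has finite total curvature and vertical ends, so only the genus/end count and the minimality need to be engineered.

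Step 1 (combinatorial configuration). Fix $g\geq 0$ and choose a symmetric configuration of $k$ geodesics $\gamma_1,\dots,\gamma_k$ in $\HH^2$, say the perpendicular bisectors of the sides of a regular hyperbolic $k$-gon centered at a fixed point $o$. For $k$ large, consecutive pairs $(\gamma_i,\gamma_{i+1})$ are arbitrarily close, and in particular within the critical distance $\eta_0$ that allows a vertical catenoid. The threshold $k_0(g)$ is chosen so that the $k$ flats admit a \emph{graph structure}: a tree with $k$ leaves (one per end) to which one adjoins $g$ extra edges to create the desired genus while keeping all catenoidal necks pairwise disjoint and embedded. A regular $k$-gon configuration with $k\geq k_0(g):= 2g+\text{const}$ suffices for this combinatorial bookkeeping.

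Step 2 (approximate surface). To each edge of the chosen graph, attach a scaled vertical catenoid $\calC_V(\gamma_i,\gamma_j)$ with small neck parameter $\e>0$ between truncations of the corresponding flats. The resulting piecewise-smooth surface $\Sigma_\e$ is only an approximately minimal surface, with error supported in the transition regions and vanishing with $\e$. Its topological type is, by construction, genus $g$ with $k$ vertical ends.

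Step 3 (linear analysis). Write a nearby surface as a normal graph $\exp_{\Sigma_\e}(w\nu)$ for some $w:\Sigma_\e\to\RR$. The minimality condition becomes $L_\e w = f_\e + Q_\e(w,\nabla w,\nabla^2 w)$, where $L_\e$ is the Jacobi operator on $\Sigma_\e$, $f_\e$ measures the failure of $\Sigma_\e$ to be minimal, and $Q_\e$ is quadratic. On each vertical end, $\Sigma_\e$ is asymptotic to a flat, so $L_\e$ reduces to $\del_t^2+\Delta_\gamma-1$ whose spectrum avoids $0$; this yields Fredholm theory for $L_\e$ on weighted Hölder spaces with exponential weight in $t$. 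The delicate point is to bound the right inverse of $L_\e$ \emph{uniformly in $\e$}: on each small neck, an approximate Jacobi field associated to translating/dilating the catenoid produces near-kernel elements. One uses the dihedral symmetry of the configuration to restrict to a symmetry-invariant function space on which the approximate kernel is finite-dimensional and explicitly described, and then absorbs these directions by adjusting a finite set of geometric gluing parameters (neck positions and scales).

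Step 4 (contraction argument). Once a uniformly bounded right inverse $G_\e$ is in hand on the symmetric space, solve the fixed-point equation $w = G_\e\bigl(f_\e+Q_\e(w,\nabla w,\nabla^2 w)\bigr)$ by contraction, using $\|f_\e\|\to 0$ as $\e\to 0$ to stay inside a small ball. The resulting $w$ is small and smooth, so $\exp_{\Sigma_\e}(w\nu)$ is a genuine minimal surface of the prescribed topology; embeddedness is preserved for $\e$ small by $\calC^1$-closeness to $\Sigma_\e$.

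The main obstacle is Step 3: identifying the approximate cokernel of $L_\e$ and matching its dimension with the available gluing parameters. All other steps are essentially standard Kapouleas-type machinery, but in $\HxR$ one must verify that the spectral gap of the linearized operator on a vertical flat and the decay properties of Jacobi fields on a vertical catenoid combine to give the uniform estimates needed for the contraction.
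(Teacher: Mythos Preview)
The paper does not actually prove this proposition; it is quoted from \cite{Martin-Mazzeo-Rodriguez}, with only the single sentence ``These surfaces are obtained by gluing together vertical catenoids along vertical strips which are positioned far from the neck regions of each catenoid'' offered by way of explanation. Your outline is a reasonable expansion of that sentence and follows the same general philosophy: build an approximate surface out of vertical catenoids and flats, then perturb to minimality via the Jacobi operator and a contraction mapping argument.

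There is, however, a genuine discrepancy in the gluing scheme. You treat the flats as the basic pieces and insert \emph{small-neck} catenoids (with neck parameter $\e\to 0$) between nearby geodesics, so the degeneration parameter is the neck size. The construction alluded to in the paper glues full (non-degenerating) vertical catenoids to one another along vertical strips in their asymptotically flat regions, far from the necks; the small parameter is then the distance to the limiting flat along those strips, not the neck size. This is not merely cosmetic: your version forces you to confront the small-eigenvalue Jacobi fields coming from degenerating necks, which you flag as ``the main obstacle,'' whereas the end-to-end gluing in \cite{Martin-Mazzeo-Rodriguez} avoids that difficulty entirely, since the building blocks are fixed nondegenerate catenoids and the linear analysis reduces to the exponential decay of the Jacobi operator on the flat ends. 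Your sketch is plausible as an alternative route, but it is harder than necessary, and you have not actually carried out the matching of cokernel dimension to gluing parameters that you identify as the crux.
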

These surfaces are obtained by gluing together vertical catenoids along vertical strips which 
are positioned far from the neck regions of each catenoid.  

It is not yet clear what the sharp constant $k_0(g)$ should equal.  Using different gluing
constructions, it now seems possible that there might exist vertical minimal surfaces 
with any genus $g$ and with only $3$ ends, but this remains conjectural at present.

\section{Minimally fillable sets on the geodesic boundary}

We now study the asymptotic behavior of properly embedded minimal surfaces in $\HR$ at
the geodesic boundary, i.e., we study $\Gbound \Sigma=\Gcomp{\Sigma}\cap \Gbound{\HR}$, 
where $\Gcomp{\Sigma}$ is the closure of $\Sigma$ in $\Gcomp{\HR}$. The main result 
is that $\Gbound \Sigma$ can be quite wild, and in particular can be quite far from being 
a curve. We exhibit a significant restriction as to which closed subsets of $\Gbound{\HR}$
can appear as $\Gbound \Sigma$ for some $\Sigma$; we also obtain a complete
classification of the subsets of this type which are embedded curves. We also give examples,
some of which show that the extreme types of behavior allowed by Theorem \ref{theo:oscillations} 
actually do occur.

\subsection{Oscillations of linear minimal surfaces}

\begin{theo}\label{theo:oscillations}
Let $\Sigma$ be a complete, properly embedded minimal surface in $\HR$.  Then for any
$q\in\partial \Hyp^2$, the intersection of $\Gbound \Sigma$ with the Weyl chamber
$W^\pm(q)$ is either empty, equal only to the pole $p^\pm$ at the end of that Weyl chamber, 
or else is a closed interval containing $q$.
\end{theo}

What this result means is that if there exists a diverging sequence of points $(q_n, t_n) \in \Sigma$ 
with $\lim_n t_n/d_\Hyp(q_n,q_0)= r$, then for any $r' \in [0,r]$, there must exist another diverging 
sequence of points $(q_n', t_n') \in \Sigma$ with $\lim_n t_n'/d_\Hyp(q_n',q_0') = r'$.    This may
happen above all equatorial points $q$ lying on some arc $c \subset \del \HH^2$, which means
that $\Sigma$ must then oscillate quite wildly in the corresponding sector of $\HR$.
This behavior may seem surprising, but is known to happen for orbits of certain discrete 
groups of isometries.  We use this observation to construct minimal surfaces exhibiting similar 
oscillations.

A somewhat similar oscillation phenomenon has already been noticed by Wolf \cite{Wolf}.
He produces examples of minimal surfaces which attain a given height above a ray in
$\HH^2$ infinitely many times. The oscillations here are slightly different since 
the heights are growing linearly rather than being just bounded. However, it would not be surprising
if the two phenomena were related. 

\begin{proof}
Assume that $\Gbound\Sigma$ meets the interior of $W^+(q)$. 

We first show that $\Gbound \Sigma$ must contain $q$. If this were not the case, then
there would exist a small equatorial arc $c$ containing $q$ and a neighborhood $U$ of $c$
in $\Gcomp{\HR}$ which does not intersect $\Sigma$. 

Consider a hyperbolic barrier $H=H(c,0,\ell)$ with $\ell$ sufficiently close to $\pi$
to ensure that $H$ stays far from the flat $F$ defined by $c$, and hence ensures 
that $H\subset U$, hence $H \cap \Sigma = \emptyset$. 

Now, consider the vertical translates $H_t=H(c,t,t+\ell)$ of $H$. Any first contact between $H_t$ 
and $\Sigma$ would have to happen in the interior, since the geodesic boundaries of these surfaces 
are independent  of $t$ and hence `uniformly disjoint'. By the maximum principle, this interior
contact is impossible, so $\Sigma$ does not meet any of the $H_t$. The union $\cup_t H_t$ equals
the set of all points of distance at least $\rho_0$ from $F$ on the side of $c$. This is incompatible 
with the fact that $\Gbound\Sigma$ meets the interior of $W^+(q)$.

We can adapt this same argument to prove that in fact, $\Gbound\Sigma$ must meet $W^+(q)$ along 
an interval containing $q$ (see Figure \ref{fig:linear}). Indeed, otherwise there would be exist some 
$(q,r)\in W^+(q) \setminus \Gbound\Sigma$ and some $r'>r$ with $(q,r')\in\Gbound\Sigma$. As before,
there would then be a neighborhood $\mathcal V$ of  $(q,r)$ in $\Gcomp{\HR}$ which does not 
intersect $\Sigma$. Fix an arc $c\subset\partial\Hyp^2$ containing $q$ such that $c\times \{r\}\subset 
\mathcal V$. Define, for any fixed $p_0 \in \HH^2$, 
\[
D=\{(p,t) \,|\, t \geqslant r d_\Hyp(p,p_0)\}, \quad \mbox{so that}\quad \del D = \{(p,t): tz=r d_\Hyp(p,p_0)\}.
\]

\begin{figure}[htb]
\labellist
\small \hair 3pt
\pinlabel $\Sigma'$ [br] at 80 210
\pinlabel $\Sigma$ [t] at 76 85
\pinlabel $H_t$ [l] at 206 165
\pinlabel $q$ [l] at 177 129
\pinlabel $(q,r)$ [bl] at 150 198
\pinlabel $(q,r')$ [bl] at 134 215
\pinlabel $V$ [r] at 109 181
\pinlabel {$\partial D$} [tr] at 81 133
\endlabellist
\centering
\includegraphics[scale=.8]{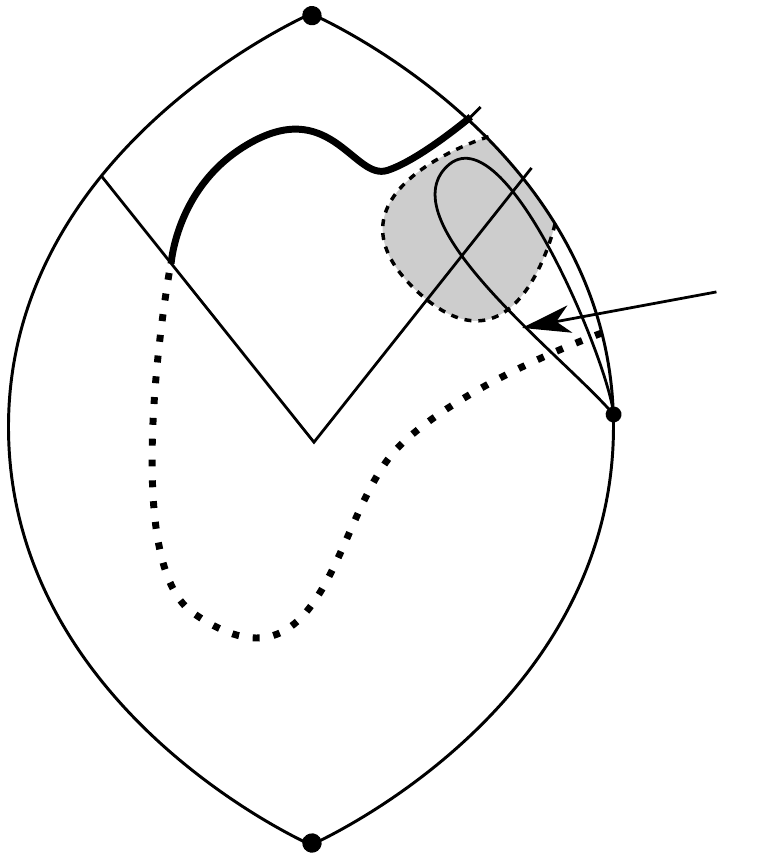}
\caption{Touching the upper part of $\Sigma$ with a hyperbolic barrier.}
\label{fig:linear}
\end{figure}

Replace $\Sigma$ by $\Sigma'=\Sigma\cap D$; this has an interior boundary contained in $\del D$ 
which lies outside $\mathcal V$. In particular $\Gbound\Sigma'$ intersects $\Gbound\HR$ only `above'
slope $r$, while the interior boundary of $\Sigma'$ is disjoint from the half space determined by some
subarc $c' \subset c$, $c' \ni q$.  Considering the same translating family of hyperbolic barriers 
$H_t=H(c',t,t+\ell)$ as before, now with $\ell$ small enough to ensure that $H_0\cap\Sigma=\varnothing$,
we see that there must  be an interior first point of contact between $\Sigma'$ and one of the $H_t$. This is 
a contradiction.
\end{proof}

As a straightforward consequence, we classify the curves $\sigma\subset \Gbound\HR$ which are minimally fillable. As before, by curve we
implicitly mean a finite union of disjoint 
\emph{Jordan curve}, i.e., of topological embedding of the circle.
\begin{coro}
Any minimally fillable curve in $\Gbound\HR$ is one of the following:
\begin{enumerate}
\item\label{enum:gfill1} the equator $\del\HH^2$;
\item\label{enum:gfill2}  the union of an equatorial arc $c=\overset{\frown}{q_1q_2}$ and 
two Weyl chambers $W^\epsilon(q_1)$, $W^\epsilon(q_2)$, $\epsilon\in\{+,-\}$;
\item\label{enum:gfill3}  the union of four Weyl chambers $W^+(q_1)$, $W^+(q_2)$,
$W^-(q_3)$ and $W^-(q_4)$ and two disjoint equatorial arcs $\overset{\frown}{q_2q_3}$ $\overset{\frown}{q_4q_1}$,
which possibly reduce to single points;
\item\label{enum:gfill4} the union of two curves of type \ref{enum:gfill2} with different $\epsilon$ and disjoint $c$.
\end{enumerate}
\end{coro}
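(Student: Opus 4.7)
The plan is to deduce the classification entirely from Theorem~\ref{theo:oscillations}, combined with the purely topological constraint that $\sigma$ is a finite disjoint union of Jordan curves (so in particular a compact $1$-manifold).

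Fix $\sigma=\Gbound\Sigma$ and, for each sign $\epsilon\in\{+,-\}$, define
\[
A^\epsilon:=\{q\in\del\Hyp^2 : \sigma\cap W^\epsilon(q)\text{ meets } \mathrm{int}\,W^\epsilon(q)\setminus\{q\}\},
\]
i.e.\ the set of $q$ for which Theorem~\ref{theo:oscillations} yields a nondegenerate interval $[q,r^\epsilon(q)]\subset\sigma$. The first key step is to show that each such interval is in fact the full Weyl chamber, namely $r^\epsilon(q)=p^\epsilon$. Suppose instead that $r^+(q)$ lies in the interior of $W^+(q)$. Since $\sigma$ is a $1$-manifold at this point, a small neighborhood of $r^+(q)$ in $\sigma$ is an open arc, one half of which runs down $W^+(q)$ toward $q$; the other half must leave $W^+(q)$, hence enters $\mathrm{int}\,W^+(q')$ for every $q'$ in some nondegenerate subarc $J\subset\del\Hyp^2$. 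But then each such $q'$ lies in $A^+$, so by Theorem~\ref{theo:oscillations} we have $[q',r^+(q')]\subset\sigma$, and the union $\bigcup_{q'\in J}[q',r^+(q')]$ is a $2$-dimensional subset of $\sigma$, contradicting that $\sigma$ is a curve.

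The second step records what happens at the poles. Once we know $W^\epsilon(q)\subset\sigma$ for every $q\in A^\epsilon$, the pole $p^\epsilon$ lies on $\sigma$ as soon as $A^\epsilon\neq\emptyset$, and each element of $A^\epsilon$ contributes one local branch of $\sigma$ at $p^\epsilon$. Since $\sigma$ is a $1$-manifold at $p^\epsilon$, the number of branches there is even and finite, and since the Jordan curve components of $\sigma$ are pairwise disjoint, at most one component can pass through $p^\epsilon$. This forces $|A^\epsilon|\in\{0,2\}$ for each sign.

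The classification then follows from case analysis on $(|A^+|,|A^-|)$. When both are zero, $\sigma\subset\del\Hyp^2$ is a union of Jordan curves inside a topological circle, hence equals the whole equator, giving case \ref{enum:gfill1}. When exactly one of $|A^\pm|$ equals $2$, the two corresponding full Weyl chambers meet at that pole and must be closed up by an equatorial arc joining the two feet; no further equatorial Jordan curve component is possible (it would have to be the entire equator and would intersect that arc), yielding case \ref{enum:gfill2}. When both are $2$, with $A^+=\{q_1,q_2\}$ and $A^-=\{q_3,q_4\}$, either a single Jordan curve threads both poles, in which case embeddedness forces the cyclic order $q_1,q_2,q_3,q_4$ on $\del\Hyp^2$ with closing arcs $\overset{\frown}{q_2q_3}$ and $\overset{\frown}{q_4q_1}$ possibly degenerate, giving case \ref{enum:gfill3}; or else the poles lie on two distinct components, each an arc of two Weyl chambers closed by a disjoint equatorial arc, giving case \ref{enum:gfill4}.

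The main obstacle is the first step: one must argue cleanly that the local continuation of $\sigma$ past an interior endpoint $r^+(q)$ really does enter the interior of $W^+(q')$ for $q'$ ranging over a nondegenerate arc (rather than, say, jumping to a single other Weyl chamber). This uses the product parametrization of $\Gbound\HR$ near a non-polar interior point of a Weyl chamber, where $(q',\rho')$ coordinates make the Weyl chambers transverse to the equator direction; continuity of the Jordan-curve parametrization then forces $q'$ to vary continuously over an interval, and Theorem~\ref{theo:oscillations} upgrades each single interior hit into a full subinterval $[q',r^+(q')]$, producing the forbidden $2$-dimensional region. The remaining steps are topological bookkeeping.
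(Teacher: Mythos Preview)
Your proof is correct and follows essentially the same approach as the paper: the key step---that $\sigma$ must contain the \emph{entire} Weyl chamber $W^\epsilon(q)$ once it enters its interior, since otherwise Theorem~\ref{theo:oscillations} would force a $2$-dimensional region into $\sigma$---is identical, and your case analysis on $(|A^+|,|A^-|)$ is simply a reorganization of the paper's argument of following the curve through each pole and along the equator. The only difference is that the paper additionally exhibits explicit minimal surfaces realizing each of the four cases (horizontal slices, semi-infinite tall rectangles, and the surfaces of Proposition~\ref{theo:fillability}), which the corollary as stated does not strictly require but which you may wish to note.
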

In this result, the notation $\overset{\frown}{q_1q_2}$ denotes either of the two geodesic arcs joining $q_1$ and $q_2$,
and no orientation is specified. Note also that there is no requirement for the minimal surface filling a curve to be connected.

\begin{proof}
We deduce first from Theorem \ref{theo:oscillations} that any connected, minimally fillable curve $\sigma\subset\Gbound\HR$ 
is of type \ref{enum:gfill1} \ref{enum:gfill2} or \ref{enum:gfill3}. If $\sigma$ does not meet any Weyl chamber except along the
equator, then it must be contained in the equator. Since $\sigma$ is assumed to be a Jordan curve, it must be equal
to the equator, hence we are in case \ref{enum:gfill1}. 

Otherwise, $\sigma$ contains an interior point of a Weyl chamber, $(q_1,r)\in W^\epsilon(q_1)$ for some $q_1\in\del\HH^2$,
$\epsilon\in\{+,-\}$ and $r\in (0,1)$. By Theorem \ref{theo:oscillations}, $\sigma$ thus contains the segment 
$[0,r]$ in that Weyl chamber. Since $\sigma$ is a curve, it cannot terminate or turn (for in that case, 
it would contain a union of segments in Weyl chambers, and hence have interior), so it contains the entire Weyl chamber 
$W^\epsilon(q_1)$.  Once it reaches $p^\epsilon$, it must follow another Weyl chamber $W^\epsilon(q_2)$. If these 
are the only Weyl chambers which meet $\sigma$, then we are in case \ref{enum:gfill2}; namely, to close up,
$\sigma$ must follow an equatorial arc to return from $q_2$ to $q_1$. 

Suppose then that $\sigma$ meets (and hence contains) more than two Weyl chambers. Suppose we start at
$q_1$ and travel up to $p^+$ with $W^+(q_1)$, then traverse $W^+(q_2)$ down to the equator. After possibly
following an equatorial arc to $q_3$, $\sigma$ must contain the chamber $W^-(q_3)$; it cannot contain
$W^+(q_3)$ since in that case $\sigma$ would self-intersect at $p^+$. It then contains some other
$W^-(q_4)$, and once again to avoid self-intersections cannot contain any other Weyl chamber, hence
the only alternative is that it contains the final equatorial arc connecting $q_4$ to $q_1$. Thus 
we are in case \ref{enum:gfill3}. 

If $\sigma$ is not connected, then each one of its connected components is one of the types 
\ref{enum:gfill1}, \ref{enum:gfill2} or \ref{enum:gfill3}.  If one component were the equator 
then the other component would have to intersect it. Similarly, if one component is of type
\ref{enum:gfill3}, then any other component would have to intersect it at one of the poles. 
The only possibility is that $\sigma$ is a union of two disjoint curves of type \ref{enum:gfill2}.
It is clearly impossible for $\sigma$ to have more than two components. 

It remains to show that each of these cases is actually realized as $\Gbound\Sigma$.

The geodesic boundary of any horizontal slice $\HH^2\times \{t\}$ achieves case \ref{enum:gfill1}. 

Next, case \ref{enum:gfill2} is realized by the semi-infinite tall rectangles $H(c,a,+\infty)$, or $H(c,-\infty,b)$.
The union of two tall rectangles with $b<a$ is a disconnected embedded minimal surface which fills
the disjoint union of two curves of this type. 

Finally, noting the relationship between the product and geodesic boundaries, the minimal surfaces described in 
Proposition \ref{theo:fillability} realize case \ref{enum:gfill3}. 
\end{proof}

\subsection{Examples}
We recall a number of examples, all previously known, which illustrate 
Theorem \ref{theo:oscillations} in different ways.

\begin{exem}
First recall the Scherk-type minimal surfaces constructed first in \cite[Section 4]{Nelli-Rosenberg},
and latter by \cite{Mazet-Rodriguez-Rosenberg}. 
These are obtained by solving a Dirichlet problem on a regular polygon with angles $\pi/2$,
imposing either finite or infinite boundary conditions on each side, and symmetrizing (by reflection
across the boundary) to obtain complete embedded surfaces.  For the surfaces constructed in this way,
$\Gbound \Sigma = \Gbound \HR$. For example, the 
surface described in of \cite[Remark 3]{Nelli-Rosenberg} contains vertical lines above every point of
a lattice $\Lambda\subset \HH^2$, the limit set of which is the entire boundary $\del\HH^2$, from
which the assertion about $\Gbound \Sigma$ follows. 

Note that in these examples, the product boundary is also quite degenerate: 
$\Pbound\Sigma$ contains $\del\HH^2\times\mathbb{R}$.
\end{exem}

\begin{exem}
Next consider the helicoid $\Sigma$ in $\HR$. This surface is invariant by a one-parameter 
group of isometries which acts by rotations on $\HH^2$ and simultaneiously by translations in
$\mathbb{R}$.  It is not hard to see that $\Gbound\Sigma=\Gbound \HR$, but on the other hand,
$\Pbound\Sigma$ is the union of a smooth helix in $\del\HH^2\times\mathbb{R}$ and the 
two caps. 
\end{exem}

Other examples with less extreme behavior have geodesic boundary which is the 
union of an equator and of a number of arcs in Weyl chambers.

\begin{exem}
Fix a one-parameter group of isometries $\varphi_t$ of $\HR$, such that $\varphi_t$ acts by
translation along a geodesic $\gamma$ on $\HH^2$, and by (nontrivial) translation on $\mathbb{R}$.
This is a `diagonal' translation of $\HR$. Let $\sigma$ be a geodesic in $\HH^2$ which is orthogonal to 
$\gamma$, and let $\Sigma = \sqcup_{t} \varphi_t(\sigma \times \{0\})$. 

The fact that $\Sigma$ is minimal can  be argued as follows. It is invariant with respect to the 
geodesic symmetry around each translate of $\sigma$, hence its mean curvature vector is invariant
under these symmetries, and must therefore vanish. One checks easily that $\Gbound \Sigma$ is 
the union of the equator and of two arcs in the Weyl chambers lying over the endpoints of
$\gamma$; the lengths of these arcs are equal and are determined by the ratio of the translation lengths 
of the vertical and horizontal parts of $\varphi_t$.
\end{exem}

\begin{exem}
By the method of \cite{Nelli-Rosenberg}, one can construct a minimal graph over $\HH^2$ which
has boundary values given by the graph of an unbounded function $\sigma:\del\HH^2\to \mathbb{R}\cup\{+\infty\}$
which is finite and smooth on $\del \HH^2 \setminus \{q\}$, and with $\sigma(q) = +\infty$.
Adjusting the rate of divergence of $\sigma$ near $q$, one can obtain a properly embedded minimal disk $\Sigma$
such that $\Gbound\Sigma$ is the union of the equator and of an arc of any length in the Weyl chamber $W^+(q)$. 
It is also possible to find such surfaces where this arc is empty or equal to the entire Weyl chamber.
\end{exem}

We now give a class of examples showing that $\Gbound\Sigma$ can be as wild as in the conclusion of Theorem \ref{theo:oscillations}
without being equal to all of $\Gbound \HR$.
\begin{theo}\label{theo:examples}
For any $r_0> 0$, there exists a complete minimal surface $\Sigma\subset\HR$ such that $\Gbound \Sigma$ is the 
entire strip $\{(q, r): |r|\leqslant r_0, q \in \del \HH^2\}$.
\end{theo}

\begin{proof}
We shall construct $\Sigma$ to be periodic with respect to some discrete subgroup $\Gamma$
of isometries of $\HR$. Indeed, let $\Gamma = \pi_1(S_g)$, where $S_g$ is a compact, orientable 
genus $g$ surface, and choose any discrete and faithful representation $\alpha:\Gamma
\to\mathrm{PSL}(2;\mathbb{R})$.

There is a standard presentation of $\Gamma$ as the quotient of the free group $\calF$ on generators 
$a_1,\dots,a_{2g}$ by a normal subgroup $R$ generated by some product of commutators 
$[a_i,a_j]=a_i^{-1}a_j^{-1}a_ia_j$. Any homomorphism $\calF \to \RR$ sends all commutators to $0$,
and hence descends to a homomorphism $\Gamma \to \RR$. We apply this to the homomorphism
which sends $a_1$ to $1$ and all other $a_i$, $i > 1$, to $0$, and denote the induced homomorphism
on $\Gamma$ by $\beta$, so $\beta(a_1)=1$ and all other $\beta (a_i) = 0$. We identify $a_i$ with 
its image under the quotient map.

Since $\alpha(a_1)$ is a hyperbolic element of $\mathrm{PSL}(2; \RR)$, it fixes a unique geodesic
$\gamma_1$ in $\HH^2$, and acts by a translation of $\tau$ along this geodesic. Consider now 
the action of $\Gamma$ on $\HR$ defined by 
\[
A: \Gamma \to \mathrm{Isom}\, (\HR), \quad  A(\gamma) = (\alpha (\gamma),\frac{r_0}{\tau}\cdot\beta(\gamma)). 
\]
The quotient $\HR/\Gamma$ associated to this action is diffeomorphic to the product $S_g\times \RR$. 
Now choose another representation $B: \ZZ \to \mathrm{Isom}\,(\HR)$ which commutes with $A$, 
and is such that the associated quotient $\HR/ (\Gamma \times \ZZ)$ is diffeomorphic to $S_g \times S^1$. 
For example, we can simply let $B(k)(p,t) = (p, t+1)$. With respect to this identification, consider the incompressible embedding
$f: S_g \hookrightarrow S_g \times \{0\} \subset S_g \times S^1$. According to a theorem of Schoen and Yau \cite{Schoen-Yau},
there is an immersion $\hat{f}: S_g \to S_g \times S^1$ which is isotopic to $f$ and such that the image
of $\hat{f}$ has least area in this isotopy class. Let us denote this surface by $\hat{S}$. By a further result of 
Freedman-Hass-Scott \cite{FHS}, $\hat{S}$ is actually embedded. It is then clear that we may pass to the 
$\ZZ$ cover and consider $\hat{S}$ as a compact embedded minimal surface in $\HR/\Gamma$. 
Finally, let $\Sigma$ be the inverse image of $\hat{S}$ in the universal cover $\HR$. Thus $\Sigma$ is 
a minimal surface which is invariant under the action of $\Gamma$.

Now consider the behavior of $\Sigma$ above the geodesic $\gamma_1$, or
in other words, the behaviour of the intersection $(\gamma_1 \times \RR) \cap \Sigma$.
Denote the endpoints of $\gamma_1$ by $q_1,q_2$.  If $(z_0,t_0) \in \Sigma$ with $z_0 \in \gamma_1$,
then the entire sequence $(z_n,t_n):=A(a_1^n)\cdot(z_0,t_0)$ lies in $\Sigma$.  Observe that
$t_n=(r_0/\tau) n$ and $d_\Hyp(z_0,z_n)=n \tau$, $n \in \mathbb{Z}$. 
It follows that $\Gbound\Sigma$ contains both $(q_1,-r_0)$ and $(q_2,r_0)$.

Suppose finally that $q$ is any point on $\partial \Hyp^2$. Since $\Gamma$ acts cocompactly
on $\Hyp^2$, $q$ lies in its limit set so we can find a sequence $h_n\in\Gamma$ such 
that the endpoints $q_1^n,q_2^n$ of the geodesic $\alpha(h_n)\cdot\gamma_1$ both limit to $q$.
The asymptotic behavior of $\Sigma$ over the geodesic $\alpha(h_n)\cdot\gamma_1$ is conjugate 
to its behavior over $\gamma$, hence $\Gbound\Sigma$ contains $(q_1^n,-r_0)$ and $(q_2^n,r_0)$
for all $n$. Since $\Gbound\Sigma$ is closed, it must contain $(q,-r_0)$ and $(q,r_0)$; using 
the previous Theorem, it must contain the whole interval $q\times[-r_0,r_0]$.

We may of course assume that $\tau$ is the systole of $\alpha$, i.e., the smallest translation length 
amongst all elements of $\alpha(\Gamma)$. Then it is clear from the construction that $\Gbound\Sigma$
is contained in the strip $\{|r|\leqslant r_0\}$, which completes the proof.
\end{proof}

Theorems \ref{theo:oscillations} and  \ref{theo:examples} were inspired by Benoist's article \cite{Benoist},
where a similar behavior is exhibited in some linear groups. It is natural to ask if any similar statements 
hold for minimal hypersurfaces (or minimal submanifolds of codimension greater than one) in more general 
symmetric spaces.   Since $\HR$ has a Euclidean factor, an equatorial point may be interpreted either as
a boundary point for a Weyl chamber as we do here, or as the center of a larger chamber.  It is not clear which 
of these would be part of the most natural to conjecture in the higher rank setting.  

\section{Open questions: minimal submanifolds in nonpositively curved symmetric spaces}
We conclude with some remarks and open questions which are meant to convey the range of unexplored territory.

\subsection{Minimal surfaces of $\HxR$}
The results quoted and proved above leave open some questions about fillability of boundary curves in 
$\del \HxR$.
\begin{prob}
Characterize the curves of $\Pbound \HxR$ which are minimally fillable.
\end{prob}
In particular, what can one say about curves which are connected and tall in the vertical part of the boundary, 
and for which the intersection with each cap is a pair of disjoint geodesics? Some of these can be filled
by minimal surfaces of Jenkins-Serrin type, cf.\ \cite{Nelli-Rosenberg}, but that method does not give
as much flexibility in the choice of vertical components as in Proposition \ref{theo:fillability}. 

\begin{prob}
Characterize the closed subsets of $\Gbound \HxR$ which arise as $\Gbound \Sigma$ for an embedded minimal surface
$\Sigma\subset \HxR$.
\end{prob}
Theorem \ref{theo:oscillations} provides a strong restriction, but is far from definitive.

\subsection{Higher-dimensional symmetric spaces}
As noted in the introduction, the starting point for our work was to consider $\HxR$ as a 
particular type of nonpositively curved symmetric spaces.  This perspective leads naturally 
to the question of understanding the properly embedded minimal submanifolds and the
asymptotic Plateau problem for other such symmetric spaces. 

The next simplest case is the product of two hyperbolic planes, possibly with 
different curvatures, $\HH^2_\kappa\times \HH^2_\lambda$.  It is likely that the 
minimal surfaces in these might have rather different behavior depending on 
whether the curvatures $\kappa$, $\lambda$ are equal to one another or not.

The space $\HH^2_\kappa\times \HH^2_\lambda$ has both a product and a geodesic boundary.
The paper \cite{Mazzeo-Vasy} explores the role of these two boundaries in linear elliptic theory.
The natural question in the present setting is then
\begin{prob}
Determine which curves or which closed surfaces in 
\[
\Pbound(\HH^2_\kappa\times \HH^2_\lambda) \quad\mbox{and} \quad
\Gbound(\HH^2_\kappa\times \HH^2_\lambda)
\]
are minimally fillable by surfaces or $3$-dimensional submanifolds.
\end{prob}

We also pose the 
\begin{prob}
Determine which curves of $\del \HH^2_\kappa \times \del \HH^2_\lambda$ are
the boundary of an embedded minimal surface of $\HH^2_\kappa\times \HH^2_\lambda$.
\end{prob}
While the complete characterization may be very difficult, one might construct examples 
using pairs of representations of a surface group $\Gamma$ into $\mathrm{PSL(2,\mathbb{R})}$
to construct periodic examples, similar to what we did in the proof of Theorem 
\ref{theo:examples}. 

Similar questions can be posed in other symmetric spaces, e.g.\ $\mathrm{SL}(3)/\mathrm{SO}(3)$.
A number of different compactifications have been studied, see \cite{Borel-Ji}, \cite{Mazzeo-Vasy2}.
\begin{prob}
Determine a compactification of a nonpositively curved symmetric space 
which is particularly well suited
for the study of the asymptotic Plateau problem. 
\end{prob}
This is of course only loosely stated, since it is not clear what `well suited' should mean.
Presumably this entails that many submanifolds of that compactification are minimally fillable. 

Note that the rank one case is basically covered by a more general
result of Lang \cite{Lang}.

\bibliographystyle{smfalpha}
\bibliography{biblio.bib}


\end{document}